\newcommand\abs[1]{\ensuremath{\lvert #1\rvert}}
\newcommand{\dist}{\operatorname{dist}}
\newtheorem{theorem}{Theorem}[section]
\newtheorem{lemma}[theorem]{Lemma}
\newtheorem{corollary}[theorem]{Corollary}
\newtheorem{proposition}[theorem]{Proposition}
\newtheorem*{claim*}{Claim}
\title[A variant of dichromatic number for digraphs with prescribed sets of arcs]{On a variant of dichromatic number for \\ digraphs with prescribed sets of arcs}
\author{O-joung Kwon}
\author{Xiaopan Lian}
\address[O-joung Kwon]{Department of Mathematics, Hanyang University, Seoul, South Korea and Discrete Mathematics Group, Institute~for~Basic~Science~(IBS), Daejeon,~South~Korea}
\address[Xiaopan Lian]{Center for Combinatorics and LPMC, Nankai University, Tianjin, China}
\email{ojoungkwon@hanyang.ac.kr}
\email{xiaopanlian@mail.nankai.edu.cn}
\thanks{ The first author is supported by the National Research Foundation of Korea (NRF) grant funded by  the Ministry of Science and ICT (No. NRF-2021K2A9A2A11101617 and No. RS-2023-00211670) and by the Institute for Basic Science (IBS-R029-C1). }
\thanks{The second author is supported by National Natural Science Foundation of China (No. 12161141006).}
\date{\today}
\def\lrG{\overset{\textup{\hspace{0.05cm}\tiny$\leftrightarrow$}}{\phantom{\in}}\hspace{-0.32cm}G}
\def\lrU{\overset{\textup{\hspace{0.05cm}\tiny$\leftrightarrow$}}{\phantom{\in}}\hspace{-0.32cm}U}
\begin{document}
\begin{abstract}
    In this paper, we consider a variant of dichromatic number  on digraphs with prescribed sets of arcs. Let $D$ be a digraph and let $Z_1, Z_2$ be two sets of arcs in $D$. For a subdigraph $H$ of~$D$, let $A(H)$ denote the set of all arcs of $H$. Let $\mu(D, Z_1, Z_2)$ be the minimum number of parts in a vertex partition $\mathcal{P}$ of~$D$ such that for every $X\in \mathcal{P}$,  the subdigraph of~$D$ induced by $X$ contains no directed cycle $C$ with  $\abs{A(C)\cap Z_1}\neq \abs{A(C)\cap Z_2}$. For $Z_1=A(D)$ and $Z_2=\emptyset$, $\mu(D, Z_1, Z_2)$ is equal to the dichromatic number of $D$.
    
    We prove that for every digraph $F$ and every tuple $(a_e,b_e,r_e, q_e)$ of integers with $q_e\ge 2$ and $\gcd(a_e,q_e)=\gcd(b_e,q_e)=1$ for each arc $e$ of~$F$, there exists an integer $N$ such that if $\mu(D, Z_1, Z_2)\ge N$, then $D$ contains a subdigraph isomorphic to a subdivision of $F$ in which each arc $e$ of $F$ is subdivided into a directed path~$P_e$ such that~$a_e|A(P_e)\cap Z_1|+b_e|A(P_e)\cap Z_2|\equiv {r_e}\pmod {q_e}$. 
    This generalizes a theorem of Steiner [Subdivisions with congruence constraints in digraphs of large chromatic number, arXiv:2208.06358] which corresponds to the case when $(a_e, b_e, Z_1, Z_2)=(1, 1, A(D), \emptyset)$.
\end{abstract}
\maketitle

\section{Introduction}

The \emph{dichromatic number} of a digraph $D$ is the minimum integer $k$ such that the vertex set of~$D$ is partitioned into $k$ parts so that every part induces a subdigraph without directed cycles. It was introduced by Neumann-Lala~\cite{Neumann1982} in 1982, and since then, it has been considered a natural variant of chromatic number for digraphs~\cite{MoharW2016,Aboulker2019,AboulkerHKR2022,GishbolinerSS2022,TamasS2022}.

It is well known that for a fixed positive integer $t$, every graph with sufficiently large chromatic number contains a subdivision of $K_t$~\cite{BollobasT1998, KomlosS1996}. But this result does not give any additional properties on the branching paths of the obtained subdivision. Based on the fact that every graph of chromatic number at least $3$ contains an odd cycle, Toft conjectured in 1975 that every graph of chromatic number at least 4 contains a totally odd subdivision of $K_4$, which is a subdivision of $K_4$ where each branching path has odd length. Zang~\cite{Zang1998} and Thomassen~\cite{Thomassen2001} independently verified this conjecture.

Thomassen~\cite{Thomassen1983} generalized this result in the following way. For each edge~$e$ of $K_t$, a pair of integers $(r_e, q_e)$ with $q_e\ge 2$ is given. Then there is an integer $N$ such that if the chromatic number of a graph is at least $N$, then it contains a subdivision of $K_t$ where the branching path corresponding to $e$ has length $r_e\pmod {q_e}$ for every edge $e$ of $K_t$.

Aboulker et al.~\cite{Aboulker2019} initiated the study of the existence of subdivisions in digraphs of large dichromatic number, and they showed that for fixed $t$, every digraph with sufficiently large dichromatic number contains a subdivision of a complete digraph on $t$ vertices. Gir\~{a}o, Popielarz, and Snyder~\cite{GiraoPS2021} showed that every tournament with minimum out-degree at least $(2+o(1))t^2$ contains a subdivision of a complete digraph on $t$ vertices. As every digraph of dichromatic number $k$ contains an induced subdigraph with minimum out-degree at least $k-1$,  every tournament with dichromatic number at least $(2+o(1))t^2$ contains a subdivision of a complete digraph on $t$ vertices.

Steiner~\cite{Steiner2022} recently obtained an analogue of the theorem of Thomassen for digraphs.

\begin{theorem}[Steiner~\cite{Steiner2022}]\label{thm:steiner}
Let $F$ be a digraph and for every arc $e$ of $F$, let $(r_e, q_e)$ be a pair of integers with $q_e\ge 2$. Then there is an integer $N$ satisfying the following.

If the dichromatic number of a digraph $D$ is at least $N$, then $D$ contains a subdigraph isomorphic to a subdivision of $F$ in which for every arc $e$ of $F$, the branching path~$P_e$ corresponding   to $e$ satisfies that~$\abs{A(P_e)}\equiv {r_e}\pmod {q_e}$. 
\end{theorem}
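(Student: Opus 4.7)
The plan is to proceed by induction on $|A(F)|$, carrying along a strengthened inductive hypothesis that supplies both the partial subdivision built so far \emph{and} a vertex-disjoint \emph{reservoir} subdigraph of high dichromatic number, in which the next branching path will be routed. The base case --- that every digraph of sufficiently large dichromatic number contains, for each $q \geq 2$ and each $0 \leq r < q$, a directed cycle of length $\equiv r \pmod{q}$ --- follows from the standard fact that dichromatic number $\geq k$ yields an induced subdigraph of minimum out-degree $\geq k-1$, combined with a BFS-level argument: rooting out-BFS at any vertex $v$ gives layers $L_0, L_1, L_2, \dots$, and the large out-degree forces many arcs $(x,y)$ with $x \in L_i$, $y \in L_j$, $j \leq i$; closing each such arc by the BFS in-tree path to $v$ produces closed walks of every desired length modulo $q$, from which directed cycles of the correct residue can be extracted.

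For the inductive step, set $Q = \mathrm{lcm}\{q_e : e \in A(F)\}$, pick an arc $e_0 = (u,v)$ of $F$, and write $F' = F - e_0$. Applying the inductive hypothesis to $F'$ inside $D$ with sufficiently large bound $N$ yields a subdivision $S'$ of $F'$ satisfying all the prescribed length residues, together with a vertex-disjoint reservoir $D^\ast$ of dichromatic number at least some $M$ determined by the one-arc subproblem. The task then reduces to locating a directed path $P_{e_0}$ from the image of $u$ to the image of $v$, internally contained in $D^\ast$, with $|A(P_{e_0})| \equiv r_{e_0} \pmod{q_{e_0}}$. Splitting $D$ into the ``used'' part (a bounded neighborhood of $S'$) and a reservoir of still-large dichromatic number uses the subadditivity of the dichromatic number across vertex partitions.

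The main obstacle is promoting the base case from \emph{cycles} of prescribed length modulo $q$ to \emph{paths of prescribed length modulo $q$ between prescribed endpoints}, which is not automatic from large dichromatic number. The natural workaround is to first find any directed $u$-to-$v$ path $P_0$ in $D^\ast$, then to invoke the cycle base case in a sub-reservoir hanging off some internal vertex of $P_0$ to locate a short directed cycle $C$ of length exactly $q_{e_0}$ touching $P_0$; traversing $C$ an appropriate number of times tunes the total length modulo $q_{e_0}$ to the desired residue $r_{e_0}$ without leaving $D^\ast$. The quantitative bookkeeping --- choosing $N$ at each induction step large enough that both the used part is affordable and the reservoir retains the dichromatic number required by the next step, with all thresholds scaling in $Q$ and $|A(F)|$ --- is where the bulk of the technical effort resides, yielding the final integer $N = N(F, (r_e, q_e)_{e \in A(F)})$.
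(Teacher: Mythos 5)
Your proposal reverses the order of operations in a way that creates a serious gap. The paper (and Steiner's original argument, which it generalizes) proceeds by induction on $\abs{A(F)}$, but the crucial move at each step is to \emph{first} find a vertex subset $X$ with two simultaneous properties: the dichromatic number of $D[X]$ stays large, and between any two vertices of $X$ there exist directed $X$-paths of every residue modulo $q$ (this is Proposition~\ref{prop:Xpath} in the generalized setting). Only after this control is in place is the subdivision of $F-f$ built inside $D[X]$; its branching vertices then lie in $X$, so the final branching path can be routed through $V(D)\setminus X$ with the required residue, and internal-disjointness from the rest of the subdivision comes for free because the new path is an $X$-path. You instead build $S'$ first and then hope for a vertex-disjoint reservoir $D^\ast$; but the path you need goes from the image of $u$ to the image of $v$, both vertices of $S'$, with interior in $D^\ast$. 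Nothing in your setup guarantees any arcs between those two branching vertices and $D^\ast$, let alone arcs oriented the right way, so such a path may simply not exist. Establishing exactly this connectivity before the branching vertices are chosen is the content of the key lemma; it cannot be recovered after the fact by subadditivity of the dichromatic number.

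Separately, the cycle-detour step fails. If the directed cycle $C$ meets $P_0$ only at a vertex $z$, splicing a traversal of $C$ into $P_0$ at $z$ yields a closed walk visiting $z$ twice, not a simple directed path, and in a digraph there is no way to reroute around a repeated vertex. (Moreover, a cycle of length exactly $q_{e_0}$ contributes $0 \pmod{q_{e_0}}$ per traversal, so however many times it is traversed the residue of the total length is unchanged.) The paper never adjusts a path by revisiting vertices; residues are controlled by choosing, for a single pass, among paths of different lengths from a common vertex into different levels of a BFS-tree.
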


In this paper, we consider a variant of dichromatic number for digraphs with prescribed sets of arcs, and generalize the theorem of Steiner. For a digraph $D$ together with a set $Z$ of arcs in $D$, it is natural to consider a minimum number of parts in a vertex partition of $D$ where each part induces a subdigraph that does not contain a directed cycle having an arc of $Z$. Cycles intersecting prescribed vertex sets or edge/arc sets have been considered in several research area. For example, algorithms for finding a minimum set to delete such cycles (Subset Feedback Vertex Set)~\cite{EvenNZ2000,CyganPPO2013,ChitnisCHM2015,HolsK2018}, and the Erd\H{o}s-P\'osa property of such cycles ($S$-cycles)~\cite{KakimuraK2012b,PontecorviW2012,HuynhJW2017,GollinHKKO2021,GollinHKOY2022} have been considered.

Motivated by the results of Thomassen~\cite{Thomassen1983} and Steiner~\cite{Steiner2022}, we prove that if this number is sufficiently large, then we can always find a subdivision of a complete digraph, where the number of arcs of $Z$ in each branching path satisfies some given modularity constraint.
We prove this in a more general setting, and for that, we introduce a new concept called $(Z_1, Z_2)$-unbalanced cycles and consider a related partition number. 

Let $D$ be a digraph and let $Z_1, Z_2$ be two sets of arcs of $D$.
For a subdigraph $H$ of $D$, let $A(H)$ denote the set of arcs in $H$. 
A directed cycle $C$ of $D$ is called \emph{$(Z_1, Z_2)$-unbalanced} if $|A(C)\cap Z_1|\neq |A(C)\cap Z_2|$.
The \emph{$(Z_1, Z_2)$-unbalanced dichromatic number} of the triple $(D, Z_1, Z_2)$, denoted by 
$\mu(D, Z_1, Z_2)$, is the minimum number of parts in a vertex partition $\mathcal{P}$ of $D$ such that for every $P\in \mathcal{P}$, the subdigraph induced by $P$ of $D$ contains no $(Z_1, Z_2)$-unbalanced directed cycles. Observe that if $Z_1=A(D)$ and $Z_2=\emptyset$, then $\mu(D_, Z_1, Z_2)$ is exactly the dichromatic number of $D$, because every directed cycle is $(Z_1, Z_2)$-unbalanced.
For convenience, when $H$ is a subdigraph of $D$, we shortly write $\mu(H, Z_1, Z_2)$ for $\mu(H, Z_1\cap A(H), Z_2\cap A(H))$.

Our main result is the following.
\begin{theorem}\label{thm:main}
Let $F$ be a digraph and for every arc $e$ of $F$, let $(a_e, b_e, r_e, q_e)$ be a tuple of integers with $q_e\ge 2$ and $\gcd(a_e, q_e)=\gcd(b_e, q_e)=1$. Then there is an integer $N$ satisfying the following.

Let $D$ be a digraph and $Z_1, Z_2\subseteq A(D)$. If $\mu(D, Z_1, Z_2)\ge N$, then $D$ contains a subdigraph isomorphic to a subdivision of $F$ in which for every arc $e$ of $F$, the branching path~$P_e$ corresponding   to $e$ satisfies that~$a_e\abs{A(P_e)\cap Z_1}+b_e\abs{A(P_e)\cap Z_2}\equiv {r_e}\pmod {q_e}$. 
\end{theorem}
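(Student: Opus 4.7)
The plan is to adapt the inductive framework of Steiner~\cite{Steiner2022} to the $(Z_1, Z_2)$-unbalanced setting, generalising each ingredient so it handles the weighted arc counts $a_e\abs{A(P_e) \cap Z_1} + b_e\abs{A(P_e) \cap Z_2}$ rather than plain path lengths. The argument is by strong induction on $|A(F)|$. For the base case $|A(F)|=0$ the subdivision is a choice of $|V(F)|$ distinct vertices of $D$, which is available whenever $N \geq |V(F)|$. For the inductive step, I would fix an arc $e^* = xy$ of $F$, set $F' = F - e^*$, apply the induction (with a larger threshold) to produce a subdivision of $F'$ in $D$ satisfying all required congruences on $A(F')$, and then locate the remaining branching path $P_{e^*}$ in the residual subdigraph obtained by deleting the internal vertices of the already-placed branching paths.

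Before running the induction I would establish the basic properties of $\mu(D, Z_1, Z_2)$ that parallel the dichromatic number: monotonicity under subdigraph inclusion, the vertex-deletion bound $\mu(D - S, Z_1, Z_2) \geq \mu(D, Z_1, Z_2) - |S|$, and the identity $\mu(D, Z_1, Z_2) = \max_S \mu(D[S], Z_1, Z_2)$ where $S$ ranges over the strongly connected components of~$D$. These properties let us pass to a strongly connected subdigraph of unchanged $\mu$ and, by choosing $N$ sufficiently large in terms of $|V(F)|$, $|A(F)|$ and the $q_e$, absorb the vertex deletions produced by the inductive step.

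The technical core is a single-path realization lemma: for each quadruple $(a,b,r,q)$ with $q \geq 2$ and $\gcd(a,q) = \gcd(b,q) = 1$ there exists $M = M(a,b,q)$ such that every strongly connected digraph $D$ with $\mu(D, Z_1, Z_2) \geq M$ contains, for every ordered pair of vertices $(x,y)$, a directed $(x,y)$-path $P$ with $a\abs{A(P) \cap Z_1} + b\abs{A(P) \cap Z_2} \equiv r \pmod q$. The proof reduces to showing that the set of weighted residues realised by simple directed cycles in $D$ equals all of $\mathbb{Z}_q$; once this is known, any target residue on an $(x,y)$-path is achievable by a cycle-detour argument in the strongly connected host. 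The generation claim itself would be proved by contrapositive: if the cycle residues lie in a proper subgroup $H \leq \mathbb{Z}_q$, then an auxiliary $\mathbb{Z}_q$-cover of $D$ (arcs $(v,i) \to (w, i + a\mathbf{1}[vw \in Z_1] + b\mathbf{1}[vw \in Z_2])$) combined with the coprimality $\gcd(a,q) = \gcd(b,q) = 1$ yields a vertex partition of~$D$ into at most $[\mathbb{Z}_q : H]$ parts with no $(Z_1, Z_2)$-unbalanced cycle inside any part, contradicting $\mu \geq M$.

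The hard part will be this realization lemma, specifically the step that converts ``cycle residues confined to a proper subgroup'' into a low-$\mu$ colouring of $D$. The subtlety is that $(Z_1, Z_2)$-unbalancedness is a $\mathbb{Z}$-valued condition on cycles, while the residue constraint lives in $\mathbb{Z}_q$, so the covering has to couple both pieces of information. I expect the argument to use a two-layered cover---one layer tracking the weighted residue modulo $q$ and one tracking the parity (or sign) of $\abs{A(C) \cap Z_1} - \abs{A(C) \cap Z_2}$---so that the absence of variety in cycle residues translates into a partition certifying small $\mu$. The coprimality assumption is precisely what guarantees that either $a$-many $Z_1$-arcs or $b$-many $Z_2$-arcs can individually generate $\mathbb{Z}_q$, which is what allows the algebraic obstruction to be transferred to a purely combinatorial one. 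Once the realization lemma is established, the main induction goes through routinely, provided $N$ is chosen as a suitable tower-type function of the input parameters.
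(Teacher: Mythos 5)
Your inductive step is arranged in the wrong order, and this creates a gap that your vertex-deletion bound cannot close. You propose to first place the $F'$-subdivision in $D$ and then route the missing branching path $P_{e^*}$ in the residual digraph obtained by deleting the interiors of the paths already placed. But those interiors have unbounded size---a branching path subject to a congruence constraint has no length bound in terms of $|V(F)|$, $|A(F)|$ or the $q_e$---so $\mu(D - S, Z_1, Z_2) \geq \mu(D, Z_1, Z_2) - |S|$ tells you nothing here, and the residual digraph might have $\mu = 1$ or even fail to connect the two branching vertices of $e^*$. The paper avoids this by reversing the order: it first extracts a vertex set $X$ with $\mu(D[X], Z_1, Z_2)$ still large and such that every ordered pair of vertices of $X$ is joined by a directed $X$-path (internally disjoint from $X$) realizing any prescribed weighted residue modulo $q_{e^*}$; it then places the $F'$-subdivision inside $D[X]$ by induction, so the final branching path is an $X$-path and is automatically internally disjoint from everything already placed. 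Thus the lemma you actually need is not ``for every $(x,y)$ a suitable $(x,y)$-path exists,'' but ``there is a set $X$ with $\mu(D[X], Z_1, Z_2)$ large such that for every $(x,y)\in X^2$ a suitable $X$-path exists.'' The set-$X$ clause is the load-bearing part, and it is missing from your realization lemma.

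Separately, your sketch of the realization lemma via covering spaces and the cycle-residue subgroup is not convincing as stated, and it is genuinely different from what the paper does. The ``cycle-detour'' step conflates walks with simple paths: a cycle of residue $\delta$ meeting a path at one vertex yields a closed walk, not a simple path, and converting this into a vertex-disjoint detour is precisely the combinatorial work the paper carries out by building a nested sequence $X_1 \supseteq Y_1 \supseteq X_2 \supseteq \cdots$ of strongly connected pieces, each carrying a short designated path whose first arc lies in $Z_1 \Delta Z_2$ and can be included or skipped independently of the others, then applying pigeonhole over these pieces to find $q-1$ such arcs all in $Z_1 \setminus Z_2$ or all in $Z_2 \setminus Z_1$, and finally using $\gcd(a,q)=1$ (or $\gcd(b,q)=1$) to solve for the right number of toggles. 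Your contrapositive step also has to convert ``cycle residues confined to a proper subgroup of $\mathbb{Z}_q$'' into a partition with no $(Z_1,Z_2)$-unbalanced cycle, but unbalancedness is the integer condition $|A(C)\cap Z_1| \neq |A(C)\cap Z_2|$; you acknowledge this mismatch but the proposed ``two-layered cover'' is only gestured at, with no account of what the second layer records or why finitely many layers could certify small $\mu$. Both points need real arguments, and even if they were supplied, the inductive step would still fail for the disjointness reason above.
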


By adapting $(a_e, b_e, Z_1, Z_2)=(1, 1, A(D), \emptyset)$, we get the theorem of Steiner as a corollary of Theorem~\ref{thm:main}. Also, by adapting $(a_e, b_e, Z_1, Z_2)=(1, 1, Z, \emptyset)$ for $Z\subseteq A(D)$, we get an extension of the theorem of Steiner for directed cycles containing an arc of $Z$.

We may consider an analogue of the $(Z_1, Z_2)$-unbalanced dichromatic number for undirected graphs with two prescribed edge sets, and obtain an analogous result for undirected graphs as a corollary of Theorem~\ref{thm:main}. We discuss this in Corollary~\ref{cor:undirected}.

The problem of obtaining a polynomial bound on the dichromatic number to force a subdivision of a complete digraph on $t$ vertices is a challenging problem~\cite{GishbolinerSS2022}. For undirected graphs, Bollob\'{a}s and Thomassen~\cite{BollobasT1998} and independently Koml\'{o}s and Szemer\'{e}di~\cite{KomlosS1996} showed that every undirected graph with minimum degree at least $Ct^2$ for some constant $C$ contains a subdivision of $K_t$. But such an analogue does not hold for digraphs; Thomassen~\cite{Thomassen1985} showed that there exist digraphs with arbitrarily high minimum out- and in-degree, which do not contain a subdivision of a complete digraph on $3$ vertices. So, to obtain a polynomial function, we need to develop a new method. As we mentioned, Gir\~{a}o, Popielarz, and Snyder~\cite{GiraoPS2021} obtained a polynomial function for tournaments.
We further ask whether the value $N$ in Theorem~\ref{thm:main} can be polynomial  in $\abs{V(F)}$ and $\max_{e\in A(F)}(q_e)$.

We sketch the proof of Theorem~\ref{thm:main}.
In both Theorem~\ref{thm:steiner} and Theorem~\ref{thm:main}, the main lemmas say that if $D$ has large $\mu$-value, then one can find a subset $X$ with the property that the $\mu$-value of $D[X]$ is still large, and for every pair $(u,v)$ of two vertices in $X$, there is a directed $X$-path from $u$ to $v$ satisfying the modular constraint.
 This can be achieved in an easier way for Theorem~\ref{thm:steiner}. We obtain a gadget consisting of a long directed path $P$ and a vertex $x$ where for each vertex $w$ of $P$, there is a directed $(x, V(P))$-path ending at $w$. Using such a gadget, we may control the number of arcs modulo $q_e$ in the desired path. This argument is not easily generalized to our setting, because an arc contained in both $Z_1$ and $Z_2$, or an arc contained in neither $Z_1$ nor $Z_2$, is not useful to control the values $\abs{A(P_e)\cap Z_1}$ and $\abs{A(P_e)\cap Z_2}$ individually. Also, arcs that are contained only in $Z_1$ or only in $Z_2$ might appear in an arbitrary pattern, and it is difficult to arrange them.

To overcome this difficulty, we find several gadgets, where in each gadget, there is one arc contained in $Z_1\setminus Z_2$ or $Z_2\setminus Z_1$ and we can control these gadgets independently. Then by pigeon-hole principle, we find $q_e$ gadgets where the arcs that we can control are all in $Z_1\setminus Z_2$ or all in $Z_2\setminus Z_1$. Using these arcs, we will find a desired directed path.

This paper is organized as follows. 
In Section~\ref{sec:prelim}, we introduce some preliminary concepts. 
In Section~\ref{sec:basiclemma}, we prove basic lemmas on $(Z_1, Z_2)$-unbalanced dichromatic number.
In Section~\ref{sec:main}, we prove Theorem~\ref{thm:main}, and discuss its corollaries.

\section{Preliminaries}\label{sec:prelim}

Let $\mathbb{N}$ denote the set of all positive integers.
For an integer~$m$, let~${[m]}$ denote the set of all positive integers at most~$m$. For two sets $A$ and $B$, we write $A\Delta B=(A\setminus B)\cup (B\setminus A)$.

For a digraph $D$, we
denote its vertex set by $V(D)$ and its arc set by
$A(D)$. Let $D$ be a digraph. If $e = (u, v)$ is an arc of $D$, then $u$ and $v$ are called its \emph{tail}
and \emph{head}, respectively. We say that $u$ is an \emph{in-neighbor} of $v$, and $v$ is an \emph{out-neighbor} of $u$.

For a vertex $v$ of $D$, let $D-v$ denote the digraph obtained from $D$ by removing $v$, and for a set~$S$ of vertices in $D$, let $D-S$ denote the digraph obtained by removing all vertices of $S$.
For a set~$S$ of vertices in $D$, let $D[S]$ denote the subdigraph of $D$ induced by $S$. A subdigraph $H$ of $D$ is \emph{spanning} if $V(H)=V(D)$.

For sets $A$ and $B$ of vertices of $D$, a directed path in $D$ is called a directed \emph{$(A, B)$-path} in $D$
if it starts at $A$ and ends at $B$, and all its internal vertices are not in $A\cup B$. We define $\dist_D(A, B)$ to be the length of a shortest directed $(A, B)$-path in $D$. If there is no directed path from $A$ to $B$, then we define $\dist_D(A,B):=\infty$. If $A$ or $B$ is a singleton~$\{v\}$, then we allow to replace $\{v\}$ with $v$ in the notations. For a set $A$ of vertices in $D$, a \emph{directed $A$-path}~$P$ is a directed path in $D$ such that the endvertices of $P$ are distinct vertices in $A$ and all its internal vertices are not in $A$.

 We say that $D$ is \emph{strongly connected} if for every pair of vertices $v$ and $w$ in $D$, there is a directed path from $v$ to $w$ and vice versa. A \emph{strong component} of $D$ is a maximal subdigraph of $D$ that is strongly connected.

A digraph $H$ is a \emph{subdivision} of a digraph $D$
  if $H$ can be obtained from $D$ by replacing a set $\{e_1, \dots,
  e_k\}$ of arcs in $D$ by pairwise internally vertex-disjoint directed paths $P_1, \dots, P_k$
  such that for each $i\in \{1, \ldots, k\}$, if $e_i = (u, v)$ then $P_i$ is a directed $(u, v)$-path.

A digraph $D$ is an \emph{orientation} of an undirected graph $G$, if $D$ is obtained from $G$ by giving a direction to each edge of $G$.
For a strongly connected digraph $D$ and $v\in V(D)$, a spanning subdigraph $T^+$ of $D$ is called an \emph{out-BFS-tree} of $D$ with root $v$ if 
\begin{itemize}
    \item $T^+$ is an orientation of a tree, and
    \item for every $w\in V(D)$, there is a directed $(v,w)$-path in $T^+$ and $\dist_{T^+}(v,w)=\dist_D(v, w)$.
\end{itemize}
Similarly, a spanning subdigraph $T^-$ of $D$ is called an \emph{in-BFS-tree} of $D$ with root $v$ if 
\begin{itemize}
    \item $T^-$ is an orientation of a tree, and
    \item for every $w\in V(D)$, there is a directed $(w,v)$-path in $T^-$ and $\dist_{T^-}(w,v)=\dist_D(w, v)$.
\end{itemize}
A sequence $(L_0, L_1, \ldots, L_p)$ of disjoint sets of
vertices in a strongly connected digraph $D$ is called an \emph{out-leveling} (rep. \emph{in-leveling}) in $D$ if
\begin{itemize}
    \item $\abs{L_0}=1$ and $L_0\cup L_1\cup \cdots \cup L_p=V(D)$, and 
    \item for every $i\in [p]$, every vertex in $L_i$ has at least one in-neighbor (rep. out-neighbor) in $L_{i-1}$ and has no in-neighbor (rep. out-neighbor) in $\bigcup_{j\in [i-2]}L_j$.
\end{itemize}
The vertex in $L_0$ is called the \emph{starting vertex} of $(L_0, L_1, \ldots, L_p)$, and each $L_i$ is called a \emph{level} of the leveling.
We can obtain an out-BFS-tree with root $v$ or an out-leveling with starting vertex $v$ using the BFS search with out-neighbors. 
Similarly,  
 we can also obtain an in-BFS-tree with root $v$ or an in-leveling with starting vertex $v$ using the BFS search with in-neighbors.

\section{Lemmas on $(Z_1, Z_2)$-unbalanced dichromatic number}\label{sec:basiclemma}

 In this section, we prove some necessary lemmas for proving the main theorem. 
We start with the following observation. 
\begin{lemma}\label{lem:comp}
    Let $D$ be a digraph and $Z_1, Z_2\subseteq A(D)$. Then $$\mu(D,Z_1, Z_2)=\max\{\mu(H,Z_1, Z_2): H \text{ is a strong component of }D\}.$$
\end{lemma}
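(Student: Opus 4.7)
The plan is a straightforward equality proof by proving two inequalities, using the standard fact that every directed cycle of $D$ is contained in a single strong component of $D$.

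First I would verify the key preliminary observation: if $C$ is any directed cycle of $D$, then $V(C)$ lies entirely inside a single strong component of $D$. Indeed, the vertices of $C$ are pairwise mutually reachable via $C$ itself, so they all belong to the same strong component. Consequently, every $(Z_1, Z_2)$-unbalanced directed cycle of $D$ (or of any induced subdigraph of $D$) is contained in some strong component of $D$.

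For the inequality $\mu(D, Z_1, Z_2) \ge \max_H \mu(H, Z_1, Z_2)$, I would take any optimal vertex partition $\mathcal{P}$ of $D$ witnessing $\mu(D, Z_1, Z_2)$ and, for a fixed strong component $H$, consider the restricted partition $\mathcal{P}_H := \{P \cap V(H) : P \in \mathcal{P}\} \setminus \{\emptyset\}$. Any $(Z_1, Z_2)$-unbalanced cycle in $H[P \cap V(H)]$ would also be such a cycle in $D[P]$, contradicting the validity of $\mathcal{P}$. So $\mathcal{P}_H$ is valid for $H$ and has at most $\abs{\mathcal{P}}$ parts.

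For the reverse inequality, let $k := \max_H \mu(H, Z_1, Z_2)$, and for each strong component $H_i$ of $D$ fix a valid partition of $V(H_i)$ into parts $X^i_1, \ldots, X^i_{k_i}$ with $k_i \le k$, using the common color set $[k]$ (padding with empty classes if needed). Define $Y_j := \bigcup_i X^i_j$ for each $j \in [k]$; this gives a partition of $V(D)$ into at most $k$ parts. If some $D[Y_j]$ contained a $(Z_1, Z_2)$-unbalanced directed cycle $C$, then by the preliminary observation $V(C)$ lies in some $V(H_i)$, so $C$ is a $(Z_1, Z_2)$-unbalanced cycle in $H_i[X^i_j]$, contradicting the choice of the partition of $H_i$. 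Hence $\mu(D, Z_1, Z_2) \le k$.

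I do not foresee a genuine obstacle here; the only point that needs care is making sure the color classes from different strong components can be combined without creating a new $(Z_1, Z_2)$-unbalanced cycle, and this is handled by the observation that such cycles are confined to a single strong component.
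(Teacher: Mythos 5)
Your proof is correct and follows essentially the same route as the paper: the key observation that every directed cycle, hence every $(Z_1,Z_2)$-unbalanced cycle, lies within a single strong component, and then combining the optimal colourings of the strong components using a common colour set. The paper states the $\ge$ direction more tersely (``$\mu$ does not increase when taking a subdigraph'') while you spell it out via restricting a partition, but the content is the same.
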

\begin{proof}
    Let $D_1, \ldots, D_t$ be the set of strong components of $D$. Observe that for distinct $i,j\in [t]$, there is no directed cycle intersecting both $D_i$ and $D_j$. So, if $\mathcal{P}_i$ is a vertex partition of each $D_i$ where each part induces a subdigraph not containing a $(Z_1, Z_2)$-unbalanced directed cycle and $X_i\in \mathcal{P}_i$ for each $i\in [t]$, then $D[\bigcup_{i\in [t]}X_i]$ has no $(Z_1, Z_2)$-unbalanced directed cycle. As $\mu$ does not increase when taking a subdigraph, this shows the desired equality.
\end{proof}

In the next lemma, we show that 
if we have an in/out-leveling in a digraph with large $(Z_1, Z_2)$-unbalanced dichromatic number, then there is one level that induces a subdigraph with large $(Z_1, Z_2)$-unbalanced dichromatic number. Furthermore, in the level, we can take a strong component having large $(Z_1, Z_2)$-unbalanced dichromatic number.

\begin{lemma}\label{lem:lel}
Let $D$ be a strongly connected digraph and $Z_1, Z_2\subseteq A(D)$. Let $(L_0, L_1, \ldots,L_p)$ be an out-leveling (rep. in-leveling) of $D$. Then there are $i\in [p]\cup \{0\}$ and a strong component $H$ of $D[L_i]$  such that
$\mu(H,Z_1, Z_2)\ge \left\lceil\frac{\mu(D,Z_1, Z_2)}{2}\right\rceil$.
\end{lemma}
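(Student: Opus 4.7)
The plan is to use a standard two-color BFS-levels argument adapted to $\mu$, combined with Lemma~\ref{lem:comp} to extract a strong component. I will do the out-leveling case; the in-leveling case follows by applying the out-leveling result to the digraph obtained by reversing every arc of $D$, since arc-reversal preserves directed cycles and the quantities $\abs{A(C)\cap Z_1}$ and $\abs{A(C)\cap Z_2}$, hence preserves $\mu$.

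The key combinatorial observation is that if $u\to v$ is an arc with $u\in L_j$ and $v\in L_i$ for some $i\in[p]$, then the out-leveling axioms force $j\ge i-1$, since otherwise $u$ would be an in-neighbor of $v$ sitting in a forbidden earlier level. Partition the levels by parity as $E=L_0\cup L_2\cup L_4\cup\cdots$ and $O=L_1\cup L_3\cup\cdots$. Any arc within $D[E]$ from $L_j$ to $L_i$ satisfies $j\ge i-1$ and, since $i,j$ are both even, $j\ge i$; the analogous statement holds for $D[O]$. Summing these non-positive level-differences around any directed cycle of $D[E]$ (resp.\ $D[O]$) forces every directed cycle, hence every $(Z_1,Z_2)$-unbalanced directed cycle, to lie inside a single level $L_{2k}$ (resp.\ $L_{2k+1}$).

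Consequently, an $(Z_1,Z_2)$-unbalanced vertex partition of $D[E]$ can be assembled level-by-level: for each even index $2k$ take an optimal partition of $D[L_{2k}]$, pad with empty parts so that all of these partitions have the same number of classes $\max_k\mu(D[L_{2k}],Z_1,Z_2)$, and amalgamate the $j$-th classes across all even levels. This yields $\mu(D[E],Z_1,Z_2)\le \max_k \mu(D[L_{2k}],Z_1,Z_2)$, and symmetrically for $O$. Since $\mu$ is subadditive on a partition of $V(D)$ into two vertex sets, combining the two estimates gives
\[
\mu(D,Z_1,Z_2)\le \mu(D[E],Z_1,Z_2)+\mu(D[O],Z_1,Z_2)\le 2\max_{i\in[p]\cup\{0\}} \mu(D[L_i],Z_1,Z_2),
\]
so some level $L_i$ satisfies $\mu(D[L_i],Z_1,Z_2)\ge \left\lceil \mu(D,Z_1,Z_2)/2 \right\rceil$. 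Applying Lemma~\ref{lem:comp} to $D[L_i]$ then produces a strong component $H$ of $D[L_i]$ achieving the same lower bound, which is exactly what the lemma demands.

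The main point to verify is the single-level containment of directed cycles in $D[E]$ and $D[O]$, and this reduces to a short pigeonhole on arc level-differences once the arc constraint $j\ge i-1$ has been extracted from the leveling axioms; the remaining bookkeeping (amalgamating partitions, invoking Lemma~\ref{lem:comp}) is routine. A mild nuisance is the singleton level $L_0$, which contains no directed cycle at all and can be absorbed either by viewing $\{L_0\}$ as its own trivial strong component or by noting that its contribution to the displayed inequality is harmless.
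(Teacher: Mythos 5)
Your proof is correct and follows essentially the same route as the paper's: split the levels by parity, observe that every directed cycle within a parity class must lie in a single level (using that arcs cannot jump forward by two or more levels), bound $\mu$ of each parity class by the maximum over its levels, use subadditivity over the two classes, and finish with Lemma~\ref{lem:comp}. The only differences are cosmetic (a direct argument instead of the paper's contradiction, and handling the in-leveling case by arc reversal rather than the parallel ``resp.''\ treatment).
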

\begin{proof}
Let \[k=\left\lceil\frac{ \mu(D,Z_1, Z_2)}{2}\right\rceil.\] This implies $\mu(D,Z_1, Z_2)\in \{2k-1, 2k\}$. Let $D_1$ and $D_2$ be the subdigraphs of $D$ induced by the union of odd levels and the union of even levels, respectively.

We claim that there exists $i\in [p]\cup \{0\}$ such that $\mu(D[L_i],Z_1, Z_2)\ge k$.
Suppose for contradiction that for every $i\in [p]\cup\{0\}$, $\mu(D[L_i],Z_1, Z_2)\le k-1$. 
Since there is no arc from $L_{i_1}$ to $L_{i_2}$ (rep. from $L_{i_2}$ to $L_{i_1}$) when $i_2-i_1\ge 2$, it implies that $\mu(D_1,Z_1, Z_2)\le k-1$ and $\mu(D_2,Z_1, Z_2)\le k-1$. 
Since we can take different sets of colors for $D_1$ and $D_2$, we have \[\mu(D,Z_1, Z_2)\le \mu(D_1,Z_1,Z_2)+\mu(D_2,Z_1,Z_2)\le 2k-2,\] contradicting the fact that $\mu(D,Z_1,Z_2)\in \{2k-1, 2k\}$. 

Therefore, there is some $i\in [p]\cup \{0\}$ such that $\mu(D[L_i],Z_1,Z_2)\ge k$. 
By Lemma~\ref{lem:comp}, there is a strong component $H$ of $D[L_i]$ such that $\mu(H,Z_1,Z_2)=\mu(D[L_i],Z_1,Z_2)$. This proves the lemma.
\end{proof}

We prove that if $\mu(D, Z_1, Z_2)$ is large, then $D$ contains many vertex-disjoint $(Z_1, Z_2)$-unbalanced directed cycles. We can obtain it by recursively choosing a shortest $(Z_1, Z_2)$-unbalanced directed cycle, and then removing it, because a shortest $(Z_1, Z_2)$-unbalanced directed cycle always can be partitioned into two parts which do not contain $(Z_1, Z_2)$-unbalanced directed cycles.

\begin{lemma}\label{lem:disjointcycles1}
Let $t$ be a positive integer.
Let $D$ be a digraph and $Z_1, Z_2\subseteq A(D)$.
If $\mu(D,Z_1, Z_2)\ge 2t$, then $D$ contains $t$ vertex-disjoint $(Z_1, Z_2)$-unbalanced directed cycles.
\end{lemma}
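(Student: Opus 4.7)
My plan is to prove the lemma by induction on $t$, implementing the strategy sketched in the paragraph preceding the statement. For the base case $t=1$, the hypothesis $\mu(D, Z_1, Z_2) \ge 2$ rules out the trivial one-part partition $\{V(D)\}$, so $D$ itself must contain a $(Z_1, Z_2)$-unbalanced directed cycle, giving the required single cycle.

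For the inductive step with $t\ge 2$, I would fix a \emph{shortest} $(Z_1, Z_2)$-unbalanced directed cycle $C$ in $D$ (which exists since $\mu(D,Z_1,Z_2)\ge 2$) and reduce to the induction hypothesis via the intermediate claim that $\mu(D[V(C)], Z_1, Z_2) \le 2$. Granting the claim, concatenating an optimal partition of $D[V(C)]$ with an optimal partition of $D - V(C)$ (using disjoint color sets) yields a valid partition of $D$, so
\[\mu(D,Z_1,Z_2) \le \mu(D[V(C)], Z_1, Z_2) + \mu(D-V(C), Z_1, Z_2),\]
which forces $\mu(D-V(C), Z_1, Z_2) \ge 2t - 2 = 2(t-1)$. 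The induction hypothesis then produces $t-1$ vertex-disjoint $(Z_1, Z_2)$-unbalanced directed cycles in $D-V(C)$, which together with $C$ form the desired collection of $t$ pairwise vertex-disjoint unbalanced cycles in $D$.

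To prove the claim $\mu(D[V(C)], Z_1, Z_2) \le 2$, I would pick any vertex $v\in V(C)$ and take the partition $X_1 = \{v\}$, $X_2 = V(C)\setminus\{v\}$; both parts are nonempty since $|V(C)|\ge 2$ (directed cycles have length at least two). For either $i\in\{1,2\}$, any directed cycle $C'$ contained in $D[X_i]$ satisfies $V(C')\subseteq X_i \subsetneq V(C)$, and hence
\[|A(C')| = |V(C')| \le |X_i| < |V(C)| = |A(C)|,\]
so $C'$ is strictly shorter than $C$. Since $C'$ is a directed cycle in $D$, the minimality of $C$ as a shortest $(Z_1, Z_2)$-unbalanced directed cycle forces $C'$ to be balanced, so neither $D[X_1]$ nor $D[X_2]$ contains an unbalanced cycle and the partition $\{X_1, X_2\}$ is valid. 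The only creative step here is this minimality observation for the partition of $V(C)$; everything else is subadditivity of $\mu$ under disjoint vertex sets and a routine induction, so I do not foresee any serious obstacle.
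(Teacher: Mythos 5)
Your proof is correct and follows essentially the same route as the paper: take a shortest $(Z_1,Z_2)$-unbalanced directed cycle $C$, note that the partition $\{v\}$, $V(C)\setminus\{v\}$ shows $\mu(D[V(C)],Z_1,Z_2)\le 2$ by minimality of $C$, deduce $\mu(D-V(C),Z_1,Z_2)\ge 2t-2$, and apply induction. The only difference is cosmetic (you argue via subadditivity directly, the paper via contradiction).
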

\begin{proof}
We prove by induction on $t$. If $t=1$, then $\mu(D,Z_1,Z_2)\ge 2$ and $D$ contains a $(Z_1, Z_2)$-unbalanced directed cycle. So, we assume that $t>1$.

Let $C$ be a shortest $(Z_1, Z_2)$-unbalanced  directed cycle of $D$, and let $v\in V(C)$. 
If $\mu(D-V(C),Z_1,Z_2)\le 2t-3$, then $\mu(D,Z_1,Z_2)\le 2t-1$ because both $D[v]$ and $D[V(C)\setminus \{v\}]$ contains no $(Z_1, Z_2)$-unbalanced directed cycles. This contradicts the assumption that $\mu(D,Z_1,Z_2)\ge 2t$. Thus, $\mu(D-V(C), Z_1,Z_2)\ge 2t-2$.

By induction hypothesis, $D-V(C)$ contains $(t-1)$ vertex-disjoint $(Z_1, Z_2)$-unbalanced directed cycles. Thus, we obtain $t$ such directed cycles together with $C$.
\end{proof}

Now we argue that if $D$ has large $(Z_1, Z_2)$-unbalanced dichromatic number, then we can find a set $X$ of vertices in $D$ where $\mu(D[X],Z_1,Z_2)$ is still large, and for any pair $(x,y)$ of vertices in $X$, there is a directed $X$-path from $x$ to $y$. 

\begin{lemma}\label{lem:longpath}
Let $D$ be a strongly connected digraph and $Z_1,Z_2\subseteq A(D)$ with $\mu(D,Z_1,Z_2)\ge 8$. Then $D$ contains a set $X$ of vertices
such that 
\begin{itemize}
    \item $D[X]$ is strongly connected,
    \item $\mu(D[X],Z_1,Z_2)\ge \frac{\mu(D,Z_1,Z_2)}{4}$, and \item for every pair $(x,y)$ of distinct vertices in $X$, there is a directed $X$-path in $D$ from $x$ to~$y$.
\end{itemize}
\end{lemma}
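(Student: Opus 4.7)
The plan is to apply Lemma~\ref{lem:lel} twice in a coordinated way: once with the out-leveling of $D$ from a chosen vertex $v_0$, and once with the partition of the resulting strong component by in-distances in $D$ to the \emph{same} $v_0$. This forces the final set $X$ to sit at a fixed out-distance $i_0\ge 1$ and a fixed in-distance $j_0\ge 1$ from $v_0$, so the two BFS-trees rooted at $v_0$ automatically produce the required directed $X$-paths between any two vertices of $X$.

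Pick $v_0\in V(D)$ and set $L_i:=\{w:\dist_D(v_0,w)=i\}$ and $N_j:=\{w:\dist_D(w,v_0)=j\}$. Applying Lemma~\ref{lem:lel} to the out-leveling $(L_0,\ldots,L_p)$ yields some $i_0\ge 1$ (since $D[L_0]$ is a single vertex with $\mu=1<4$) and a strong component $H_1$ of $D[L_{i_0}]$ with $\mu(H_1,Z_1,Z_2)\ge 4$; in particular $v_0\notin V(H_1)$. Next I consider the partition $V(H_1)=\bigsqcup_j (V(H_1)\cap N_j)$. This need not be an in-leveling of $H_1$, but every arc $(a,b)\in A(H_1)$ satisfies $\dist_D(a,v_0)\le 1+\dist_D(b,v_0)$, so if $a\in N_{j_a}$ and $b\in N_{j_b}$ then $j_b\ge j_a-1$. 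This one-sided inequality is exactly what drives the parity argument in the proof of Lemma~\ref{lem:lel}: restricting $H_1$ to odd-$j$ vertices or to even-$j$ vertices gives a subdigraph in which every directed cycle has constant $j$-value (an arc in such a restriction forces $j_b-j_a\ge 0$ by parity, and the sum around a cycle is zero), hence $\mu(H_1,Z_1,Z_2)\le 2\max_j\mu(H_1[V(H_1)\cap N_j],Z_1,Z_2)$. By Lemma~\ref{lem:comp}, some $j_0$ admits a strong component $H_2$ of $H_1[V(H_1)\cap N_{j_0}]$ with $\mu(H_2,Z_1,Z_2)\ge\mu(D,Z_1,Z_2)/4$, and $j_0\ge 1$ because $v_0\notin V(H_1)$ forces $V(H_1)\cap N_0=\emptyset$. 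Set $X:=V(H_2)$.

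Finally, fix an out-BFS-tree $T^+$ and an in-BFS-tree $T^-$ of $D$ rooted at $v_0$. For distinct $x,y\in X$, the tree $T^-$ supplies a directed $(x,v_0)$-path of length $j_0$ whose internal vertices lie in $N_1\cup\cdots\cup N_{j_0-1}$, and $T^+$ supplies a directed $(v_0,y)$-path of length $i_0$ whose internal vertices lie in $L_1\cup\cdots\cup L_{i_0-1}$. Since $X\subseteq L_{i_0}\cap N_{j_0}$ and $v_0\in L_0\setminus X$, neither the internal vertices of the two paths nor $v_0$ itself lies in $X$; the concatenation is a directed walk from $x$ to $y$ avoiding $X$ internally, and shortening it to a simple path produces the desired directed $X$-path. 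The step I expect to be the most delicate is the second application of the parity argument, because the partition of $V(H_1)$ by in-distances in $D$ is not a leveling of $H_1$ in the sense of Section~\ref{sec:prelim}; the one-sided inequality $j_b\ge j_a-1$ is what rescues the argument and keeps the total loss at a factor of four.
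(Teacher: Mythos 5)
Your proof is correct, and while it follows the paper's overall skeleton---two rounds of a halving argument over BFS levels, then BFS-tree paths to realize the $X$-paths---the execution of both rounds is genuinely different. The paper first applies Lemma~\ref{lem:lel} to an in-leveling of $D$ rooted at $x_0$ to get a strong component $X_1$ of one level, then picks a directed $(x_0,X_1)$-path $Q$ ending at $x_1$ and applies Lemma~\ref{lem:lel} a second time to an out-leveling of $D[X_1]$ rooted at $x_1$; the connecting path from $x$ to $y$ then runs down the in-leveling to $x_0$, along $Q$ to $x_1$, and stays inside $X_1$ for its final leg, so it is automatically internally disjoint from the first leg. You instead root everything at a single vertex $v_0$, and in the second round you partition $V(H_1)$ by in-distances in $D$ rather than by a leveling of $H_1$; as you note, Lemma~\ref{lem:lel} cannot be quoted there, and your by-hand replacement is sound: for an arc $(a,b)$ one has $\dist_D(a,v_0)\le 1+\dist_D(b,v_0)$, so within the odd (resp.\ even) in-distance classes the in-distance is nondecreasing along arcs, every directed cycle of either restriction lies in a single class, and hence $\mu(H_1,Z_1,Z_2)\le 2\max_j\mu(D[V(H_1)\cap N_j],Z_1,Z_2)$; Lemma~\ref{lem:comp} then yields $H_2$ as you say. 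Your path construction through the hub $v_0$ also checks out: internal vertices of the $T^-$-path lie in $N_1\cup\dots\cup N_{j_0-1}$ and those of the $T^+$-path in $L_1\cup\dots\cup L_{i_0-1}$, both disjoint from $X\subseteq L_{i_0}\cap N_{j_0}$, and extracting a simple path from the concatenated walk only deletes vertices, so the result is a directed $X$-path from $x$ to $y$. What your variant buys is that no connector path $Q$ and no nested leveling inside $X_1$ are needed; what the paper's version buys is that both halvings are literal applications of Lemma~\ref{lem:lel} and no walk-to-path cleanup is required. Both arguments lose exactly the factor $4$.
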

\begin{proof}
Since $D$ is strongly connected, there is an in-leveling $(L_0, L_1, \ldots, L_p)$ of $D$ with starting vertex $x_0$. By Lemma~\ref{lem:lel}, there are $t\in [p]\cup \{0\}$ and $X_{1}\subseteq L_t$ such that \begin{itemize}
    \item $D[X_1]$ is a strong component of $D[L_t]$, and 
    \item $\mu(D[X_1],Z_1,Z_2)\ge \frac{\mu(D,Z_1,Z_2)}{2}\ge 4$.
\end{itemize}  
As $\mu(D[X_1],Z_1,Z_2)\ge 4$, we have $t\neq 0$.

Because $(L_0, L_1, \ldots, L_p)$ is an in-leveling of $D$, for every vertex $u\in X_1\subseteq L_t$, there is a directed path $P_{(u,x_0)}$ of length $t$ from $u$ to $x_0$, which contains exactly one vertex from each of $L_0, L_1, \ldots, L_t$. So this directed path intersects $X_1$ only at $u$.

Let $Q$ be a directed $(x_0, X_1)$-path. Such a path exists because $D$ is strongly connected. Let $\{x_1\}=V(Q)\cap X_1$. Since $D[X_1]$ is strongly connected, there is an out-leveling  $(L'_0, L'_1, \ldots, L'_{p'})$ of $D[X_1]$ with starting vertex $x_1$. Again by Lemma~\ref{lem:lel}, there are $t'\in [p']\cup \{0\}$ and $X_{2}\subseteq L'_{t'}\subseteq X_1$ such that \begin{itemize}
    \item $D[X_2]$ is a strong component of $D[L'_{t'}]$, and 
    \item $\mu(D[X_2],Z_1,Z_2)\ge \frac{\mu(D[X_1],Z_1,Z_2)}{2}\ge \frac{\mu(D,Z_1,Z_2)}{4}\ge 2$.
\end{itemize}  
As $\mu(D[X_2],Z_1,Z_2)\ge 2$, we have $t'\neq 0$.

Since $(L'_0, L'_1, \ldots, L'_{p'})$ is an out-leveling of $D[X_1]$, for every vertex $u\in X_2\subseteq L'_{t'}$, there is a directed path $P'_{(x_1,u)}$ in $D[X_1]$ from $x_1$ to $u$, which contains exactly one vertex from each of $L'_0, L'_1, \ldots, L'_{t'}$. So this directed path intersects $X_2$ only at $u$.

Set $X:=X_2$. We claim that for any pair $(x,y)$ of distinct vertices in $X$, there is a directed $X$-path in $D$ from $x$ to $y$. Note that $P_{(x,x_0)}$ is a directed $(x, x_0)$-path which intersects $X$ only at $x$, and $Q$ is a directed $(x_0, x_1)$-path which intersects $X_1$ only at $x_1$, and $P'_{(x_1,y)}$ is a directed $(x_1, y)$-path in $D[X_1]$ which intersects $X$ only at $y$. Let $R$ be a shortest $(x, x_1)$-path in $P_{(x,x_0)}\cup Q$. Then, $R\cup P'_{(x_1,y)}$ is a directed $X$-path in $D$ from $x$ to $y$. 
\end{proof}

\begin{corollary}\label{cor:longpath2}
Let $m$ be a positive integer. Let $D$ be a strongly connected digraph with $Z_1,Z_2\subseteq A(D)$. If $\mu(D,Z_1,Z_2)\ge 2^{2m+1}$, then there is a sequence $S_0\supseteq S_1\supseteq S_2\supseteq \cdots \supseteq S_m$ of sets of vertices in $D$ such that for every $i\in [m]$,
\begin{itemize}
    \item $D[S_i]$ is strongly connected, 
    \item $\mu(D[S_i],Z_1,Z_2)\ge \frac{\mu(D,Z_1,Z_2)}{4^{i}}$, and
    \item for every pair $(x,y)$ of distinct vertices in $S_m$, there is a directed $S_m$-path in $D[S_m\cup (S_{i-1}\setminus S_i)]$ from $x$ to $y$.
\end{itemize}
\end{corollary}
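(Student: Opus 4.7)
The plan is to iterate Lemma~\ref{lem:longpath} a total of $m$ times, extracting a nested sequence $V(D) = S_0 \supseteq S_1 \supseteq \cdots \supseteq S_m$.

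Formally, I would proceed by induction on $i$. Set $S_0 := V(D)$, which satisfies $\mu(D[S_0],Z_1,Z_2) \ge 2^{2m+1}$ and $D[S_0]$ is strongly connected by hypothesis. Suppose $S_{i-1}$ has been defined with $D[S_{i-1}]$ strongly connected and $\mu(D[S_{i-1}],Z_1,Z_2) \ge \mu(D,Z_1,Z_2)/4^{i-1}$ for some $i \in [m]$. Since $\mu(D,Z_1,Z_2) \ge 2^{2m+1}$ and $i-1 \le m-1$, we get
\[
\mu(D[S_{i-1}],Z_1,Z_2) \;\ge\; \frac{2^{2m+1}}{4^{i-1}} \;\ge\; \frac{2^{2m+1}}{4^{m-1}} \;=\; 8,
\]
so Lemma~\ref{lem:longpath} applies to $D[S_{i-1}]$ and yields a subset $S_i \subseteq S_{i-1}$ with $D[S_i]$ strongly connected and $\mu(D[S_i],Z_1,Z_2) \ge \mu(D[S_{i-1}],Z_1,Z_2)/4 \ge \mu(D,Z_1,Z_2)/4^i$, verifying the first two bullets.

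For the third bullet, fix any pair $(x,y)$ of distinct vertices in $S_m$ and any $i \in [m]$. Since $S_m \subseteq S_i$, both $x$ and $y$ lie in $S_i$, so Lemma~\ref{lem:longpath} (applied at step $i$ to $D[S_{i-1}]$) guarantees a directed $S_i$-path $P$ from $x$ to $y$ in $D[S_{i-1}]$. By definition of $S_i$-path, the internal vertices of $P$ avoid $S_i$ and hence lie in $S_{i-1} \setminus S_i$, while the endpoints $x,y$ lie in $S_m$. Thus $V(P) \subseteq S_m \cup (S_{i-1}\setminus S_i)$, and $P$ is in particular a directed $S_m$-path (its endpoints are in $S_m$ and its internal vertices avoid $S_m$ because they avoid $S_i \supseteq S_m$).

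There is no real obstacle here: the whole statement is essentially a packaging of $m$ successive applications of Lemma~\ref{lem:longpath}, where the bound $2^{2m+1}$ is tuned precisely so the hypothesis $\mu \ge 8$ remains valid through the last iteration. The only point that requires a line of care is observing that the $S_i$-path produced by the lemma, when restricted to endpoints in $S_m$, automatically has internal vertices in $S_{i-1}\setminus S_i$ and therefore sits inside $D[S_m \cup (S_{i-1}\setminus S_i)]$ as an $S_m$-path, as required.
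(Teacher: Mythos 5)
Your proof is correct and takes essentially the same approach as the paper: iterate Lemma~\ref{lem:longpath} a total of $m$ times, using the bound $\mu(D,Z_1,Z_2)\ge 2^{2m+1}$ to guarantee $\mu(D[S_{i-1}],Z_1,Z_2)\ge 8$ at every step, and then observe that an $S_i$-path in $D[S_{i-1}]$ between two vertices of $S_m$ has all its internal vertices in $S_{i-1}\setminus S_i$ and so lies in $D[S_m\cup(S_{i-1}\setminus S_i)]$. The paper phrases the construction as ``take a maximal such sequence and argue it has length $m$'' rather than explicit induction, but the content is identical.
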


\begin{proof}
We claim that for every integer $0\le t\le m$,  there is a sequence of vertex subsets $V(G)=S_0\supseteq S_1\supseteq S_2\supseteq \cdots \supseteq S_t$ such that 
\begin{itemize}
    \item[(i)] for every $i\in [t]$ and  every pair $(x,y)$ of distinct 
 vertices in $S_t$, there is a directed $S_t$-path from $x$ to $y$, and 
    \item[(ii)] for every $i\in [t]$, $D[S_i]$ is strongly connected and $\mu(D[S_i],Z_1,Z_2)\ge \frac{\mu(D,Z_1,Z_2)}{4^{i}}$.
\end{itemize}
Note that ($S_0=V(D)$) is a sequence for $t=0$. 
Let $V(D)=S_0\supseteq S_1 \supseteq \cdots \supseteq S_t$ with $0\le t\le m$ be a sequence of sets of vertices in $D$ satisfying the conditions (i) and (ii) as claimed with $t$ being maximal. 
We show that $t=m$. 
Suppose that $t<m$. Note that $D[S_t]$ is a strongly connected graph with 
\[\mu(D[S_t],Z_1,Z_2)\ge \frac{\mu(D,Z_1,Z_2)}{4^{t}}\ge 2^{2m+1-2t}\ge 8.\]

We apply Lemma~\ref{lem:longpath} to $D[S_{t}]$ and obtain a set $S_{t+1}\subseteq S_{t}$ such that 
\begin{itemize}
\item $D[S_{t+1}]$ is strongly connected,  
\item $\mu(D[S_{t+1}],Z_1,Z_2)\ge \frac{\mu(D[S_t],Z_1,Z_2)}{4}\ge \frac{\mu(D,Z_1,Z_2)}{4^{t+1}}$, and
    \item for every pair $(x,y)$ of distinct vertices in $S_{t+1}$, there is a directed $S_{t+1}$-path $P$ in $G[S_t]$ from $x$ to $y$.
\end{itemize}

We have that the sequence $V(D)=S_0\supseteq S_1 \supseteq \cdots \supseteq S_t\supseteq S_{t+1}$ satisfies the conditions (i) and (ii) contradicting the fact that $t$ is maximal. Therefore, $t=m$.

By (ii), the sequence satisfies the first two required conditions. To see that it satisfies the last condition, let $i\in [m]$ and $(x,y)$ be a pair of distinct veritces in $S_m$. Since $S_m\subseteq S_i$, there is a directed $S_i$-path in $D[S_{i-1}]$ from $x$ to $y$. Since this path intersects $S_i$ only at $x$ and $y$, it is also a directed path in $D[S_m\cup (S_{i-1}\setminus S_i)]$.
\end{proof}

\section{Proof of Theorem~\ref{thm:main}}\label{sec:main}

In this section, we prove Theorem~\ref{thm:main}. In order to prove Theorem~\ref{thm:main}, we prove in Proposition~\ref{prop:Xpath} that 
if 
$\mu(D,Z_1,Z_2)$ is sufficiently large, then $D$ contains a set $X$ of vertices such that 
$\mu(D[X],Z_1,Z_2)$ is still large, and 
for every pair $(u,v)$ of distinct vertices in $X$ and every tuple $(a, b, \ell)$ of integers with $\gcd(a,q)=\gcd(b,q)=1$, there exists a directed $X$-path $Q$ from $u$ to $v$ satisfying that $a|A(Q)\cap Z_1|+b|A(Q)\cap Z_2|\equiv \ell\pmod q$.
Using this proposition, we can show Theorem~\ref{thm:main} by the induction on the number of arcs of $F$.

We obtain Proposition~\ref{prop:Xpath}
 as follows. We illustrate in Figure~\ref{fig:Xpath}. By Lemma~\ref{lem:comp}, we may assume that $D$ is strongly connected. First, we take an in-leveling of $D$ with some starting vertex $x$. By Lemma~\ref{lem:lel}, there is a set $X^*$ of vertices such that $D[X^*]$ is strongly connected and $\mu(D[X^*], Z_1, Z_2)$ is large.
 Let $P$ be a directed $(x, X^*)$-path and let $x^*$ be the last vertex of $P$. From $X^*$ with $x^*$, we recursively take an out-leveling, and then take a strongly connected subdigraph with large $\mu$-value. In each step, we keep one particular directed path, which contains an arc that can be used to control the value $\abs{A(P_e)\cap Z_1}$ or $\abs{A(P_e)\cap Z_2}$. At the end, we obtain a desired set $X$.  When we choose two vertices $u$ and $v$, as $X^*$ was chosen from an in-leveling, there is a directed $(u, x)$-path in $D[(V(D)\setminus X^*)\cup \{u\}]$. Controlling arcs that are contained in $Z_1\Delta Z_2$ that we find in $X^*$, we will obtain a directed $X$-path from $u$ to $v$, with $a|A(Q)\cap Z_1|+b|A(Q)\cap Z_2|\equiv \ell\pmod q$.

   \begin{figure}
  \centering
  \begin{tikzpicture}[scale=0.8]
  \tikzstyle{w}=[circle,draw,fill=black!50,inner sep=0pt,minimum width=4pt]
  \tikzset{edge/.style = {->,> = latex'}}

    \draw[rounded corners=2mm, thick] (-2, 5)--(7, 5)--(7,0)--(-7,0)--(-7,5)--(0,5);

    \draw[rounded corners=2mm, thick] (-2, 4.7)--(2, 4.7)--(2,1.3)--(-6.7,1.3)--(-6.7,4.7)--(-2,4.7);

    \draw[rounded corners=2mm, thick] (-3, 4.4)--(-2, 4.4)--(-2,1.6)--(-6.4,1.6)--(-6.4,4.4)--(-3,4.4);

     \node at (4.5, 0.5) {$D$};
     \node at (1.5, 1.8) {$X^*$};
     \node at (-2.5, 2) {$X$};
    
    \draw (4, 3) node [w] (x) {};
     \node at (4.5, 3) {$x$};

    \draw (-3, 3) node [w] (w4) {};          \node at (-3.5, 3) {$v$};

    \draw (-4, 2) node [w] (w5) {};       
    \draw (-2, 0.5) node [w] (w6) {};       
    \draw (0, 0.5) node [w] (w7) {};       
    \draw (2, 0.5) node [w] (w8) {};          
    \node at (-4.5, 2) {$u$};
       \draw[edge, thick] (w5) to (w6);
       \draw[edge, thick] (w6) to (w7);
       \draw[edge, thick] (w7) to (w8);
       \draw[edge, thick] (w8) to (x);

        \draw (1.5, 3) node [w] (w2) {};        
     \node at (1.5, 3.5) {$x^*$};
     \node at (3, 3.3) {$P$};

       \draw[edge, thick, dashed] (w2) to (w4);

       \draw[edge, thick] (x) to (w2);

  \end{tikzpicture}     \caption{An illustration of the sketch of Proposition~\ref{prop:Xpath}. }\label{fig:Xpath}
\end{figure}
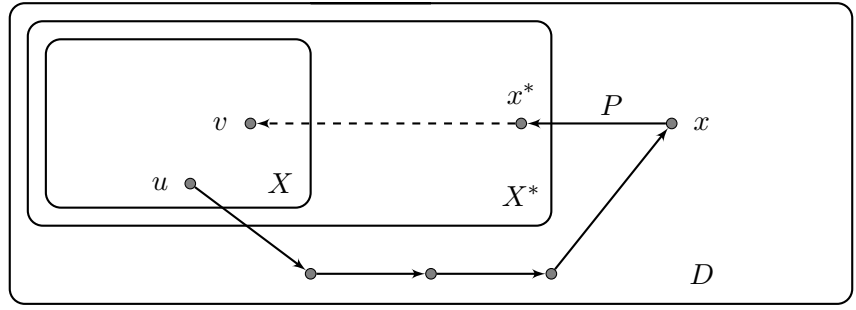

We start with the following lemma.
\begin{lemma}\label{lem:cycletwoedges}
    Let $D$ be a strongly connected digraph and $Z_1, Z_2\subseteq A(D)$.
    If $\mu(D, Z_1, Z_2)\ge 1536$, then $D$ contains a directed cycle containing at least two arcs in $Z_1\Delta Z_2$.
\end{lemma}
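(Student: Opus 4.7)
The plan is to pass to a subdigraph where $\mu$ remains large and any two of its vertices are linked by two ``disjoint-corridor'' $S$-paths, locate there two vertex-disjoint arcs of $W:=Z_1\Delta Z_2$, and stitch them into a single cycle using those paths.

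Since $1536\ge 2^{2\cdot 2+1}$, apply Corollary~\ref{cor:longpath2} with $m=2$ to obtain a chain $V(D)=S_0\supseteq S_1\supseteq S_2$ such that $D[S_2]$ is strongly connected, $\mu(D[S_2],Z_1,Z_2)\ge 1536/16=96$, and the following path property holds: for every ordered pair $(x,y)$ of distinct vertices in $S_2$, there is a directed $S_2$-path from $x$ to $y$ inside $D[S_2\cup(S_0\setminus S_1)]$ and another inside $D[S_2\cup(S_1\setminus S_2)]$. The two corridors $S_0\setminus S_1$ and $S_1\setminus S_2$ are disjoint, and both are disjoint from $S_2$.

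Next, build the auxiliary simple graph $G$ on vertex set $S_2$ in which $\{u,v\}$ is an edge whenever $(u,v)$ or $(v,u)$ lies in $W\cap A(D[S_2])$. Any proper coloring of $G$ is a valid $(Z_1,Z_2)$-unbalanced partition of $D[S_2]$: each color class induces in $D[S_2]$ a subdigraph containing no arc of $W$, so every cycle in that induced subdigraph has $|A(C)\cap Z_1|=|A(C)\cap Z_2|$ and is balanced. Hence $\chi(G)\ge \mu(D[S_2],Z_1,Z_2)\ge 96$. In particular $G$ has matching number at least $2$: any graph with matching number $1$ has all its edges sharing a common vertex or forming a triangle, and so has chromatic number at most $3$. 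Pick a $2$-matching $\{u_1v_1,u_2v_2\}$ of $G$ and lift it to two arcs $e_1=(u_1,v_1),\, e_2=(u_2,v_2)\in W\cap A(D[S_2])$ with $\{u_1,v_1\}\cap\{u_2,v_2\}=\emptyset$.

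Finally, use the path property of Corollary~\ref{cor:longpath2} to produce an $S_2$-path $P_1$ from $v_1$ to $u_2$ inside $D[S_2\cup(S_0\setminus S_1)]$ and an $S_2$-path $P_2$ from $v_2$ to $u_1$ inside $D[S_2\cup(S_1\setminus S_2)]$. Their internal vertices lie in the two disjoint corridors and in particular avoid $S_2\supseteq\{u_1,v_1,u_2,v_2\}$, so the closed walk $e_1\,P_1\,e_2\,P_2$ is a simple directed cycle $C$ in $D$ with $\{e_1,e_2\}\subseteq A(C)\cap W$, as required. The step I expect to require the most care is the middle one: converting the lower bound on $\mu(D[S_2],Z_1,Z_2)$ into \emph{vertex-disjoint} $W$-arcs rather than merely many $W$-arcs. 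The matching-number characterization (stars or $K_3$) is what turns a chromatic lower bound into the desired $2$-matching.
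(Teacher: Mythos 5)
Your proof is correct, and while it shares the paper's overall skeleton---apply Corollary~\ref{cor:longpath2} to obtain nested sets with disjoint ``corridors'', find two vertex-disjoint arcs of $Z_1\Delta Z_2$ in the innermost strongly connected set, and stitch them into one directed cycle whose connecting paths run through distinct corridors---it replaces the key middle step by a different argument. The paper takes $m=4$, reserves two vertices $v_1,v_2\in S_4$, and applies Lemma~\ref{lem:disjointcycles1} to $D[S_4\setminus\{v_1,v_2\}]$ (where $\mu\ge 4$) to get two vertex-disjoint $(Z_1,Z_2)$-unbalanced cycles, extracting one arc of $Z_1\Delta Z_2$ from each and routing the linking paths through four corridors via $v_1,v_2$. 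You instead take $m=2$ and note that any proper colouring of the auxiliary graph on $S_2$ recording arcs of $Z_1\Delta Z_2$ yields a valid partition for $\mu$, so that graph has chromatic number at least $96$; since a graph with matching number at most $1$ is a star or a triangle plus isolated vertices and hence $3$-colourable, you obtain a $2$-matching, i.e.\ two vertex-disjoint arcs of $Z_1\Delta Z_2$ inside $D[S_2]$, which you then join through the two disjoint corridors $S_0\setminus S_1$ and $S_1\setminus S_2$. Your route is more elementary (it bypasses Lemma~\ref{lem:disjointcycles1} and the reserved vertices), needs only two corridors, and in fact proves the statement under the weaker hypothesis $\mu(D,Z_1,Z_2)\ge 64$; the paper's version is quantitatively more wasteful but reuses machinery it has already set up. Both arguments certify simplicity of the final cycle in the same way, by confining the internal vertices of the two connecting paths to disjoint corridors avoiding the innermost set, and your use of Corollary~\ref{cor:longpath2} (the path for each index $i\in[m]$ separately) is consistent with how it is stated and proved.
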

\begin{proof}
By Corollary~\ref{cor:longpath2} with $m=4$, there is a sequence $S_0\supseteq S_1\supseteq \cdots \supseteq S_4$ of sets of vertices in~$D$ such that for every $i\in [4]$,
\begin{itemize}
        \item $D[S_i]$ is strongly connected,
        \item $\mu(D[S_i],Z_1,Z_2)\ge \frac{\mu(D,Z_1,Z_2)}{4^{i}}$, and
        \item for every pair $(x,y)$ of distinct vertices in $S_4$, there is a directed $S_4$-path in $D[S_4\cup (S_{i-1}\setminus S_i)]$ from $x$ to $y$.
\end{itemize}
For each $j\in [4]$, let $L_j:=S_{j-1}\setminus S_j$. Then $L_i\cap L_j=\emptyset$ for distinct integers $i,j\in [4]$.

Observe that
\[\mu(D[S_4],Z_1,Z_2)\ge \frac{\mu(D,Z_1,Z_2)}{4^4}\ge 6.\]
Let $v_1$ and $v_2$ be two vertices in $S_4$ and let $T:=S_4\setminus \{v_1,v_2\}$. Since $\mu(D[S_4],Z_1,Z_2)\ge 6$, we have that  
    \[\mu(D[T],Z_1,Z_2)\ge \mu(D[S_4],Z_1,Z_2)-|\{v_1,v_2\}|\ge 4.\] 
Therefore, by Lemma~\ref{lem:disjointcycles1} with $t=2$, $D[T]$ contains two vertex-disjoint $(Z_1,Z_2)$-unbalanced directed cycles, say $C_1$ and $C_2$. By the definition of $(Z_1,Z_2)$-unbalanced directed cycles, for each $i\in [2]$, there is an arc $e_i=(x_i, y_i)$ in $C_i$ such that $e_i\in Z_1\Delta Z_2$.

Note that $\{v_1,v_2,x_1,x_2,y_1,y_2\}\subseteq S_4$. By the third property of the sequence $S_0\supseteq \cdots \supseteq S_4$, there are 
\begin{itemize}
    \item a directed $(v_1, x_1)$-path $P_{(v_1,x_1)}$ in $D[\{v_1,x_1\}\cup L_1]$, 
    \item a directed $(y_1, v_2)$-path $P_{(y_1,v_2)}$ in $D[\{y_1,v_2\}\cup L_2]$, 
    \item a directed $(v_2, x_2)$-path $P_{(v_2,x_2)}$ in $D[\{v_2,x_2\}\cup L_3$], and 
    \item a directed $(y_2, x_1)$-path $P_{(y_2,x_1)}$ in $D[\{v_2,x_1\}\cup L_4]$. 
\end{itemize}
Let \[C:=P_{(v_1,x_1)}\cup D[\{x_1,y_1\}]\cup P_{(y_1,v_2)}\cup P_{(v_2,x_2)}\cup D[\{x_2,y_2\}]\cup P_{(y_2,x_1)}.\] 
Since $L_i\cap L_j=\emptyset$ for distinct integers $i,j\in [4]$, $C$ is a directed cycle. As $e_1$ and $e_2$ are contained in exactly one of $Z_1$ and $Z_2$, $C$ is a desired directed cycle.
\end{proof}

    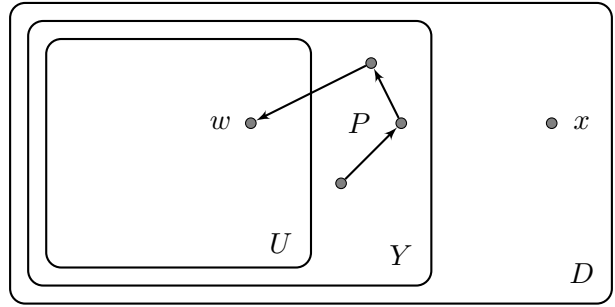
\begin{figure}
  \centering
  \begin{tikzpicture}[scale=0.8]
  \tikzstyle{w}=[circle,draw,fill=black!50,inner sep=0pt,minimum width=4pt]
  \tikzset{edge/.style = {->,> = latex'}}

    \draw[rounded corners=2mm, thick] (0, 5)--(5, 5)--(5,0)--(-5,0)--(-5,5)--(0,5);

    \draw[rounded corners=2mm, thick] (-2, 4.7)--(2, 4.7)--(2,0.3)--(-4.7,0.3)--(-4.7,4.7)--(-2,4.7);

    \draw[rounded corners=2mm, thick] (-2, 4.4)--(0, 4.4)--(0,0.6)--(-4.4,0.6)--(-4.4,4.4)--(-2,4.4);

     \node at (4.5, 0.5) {$D$};
     \node at (1.5, 0.8) {$Y$};
     \node at (-0.5, 1) {$U$};
    \node at (0.8, 3) {$P$};
    
    \draw (4, 3) node [w] (x) {};
     \node at (4.5, 3) {$x$};

    \draw (-1, 3) node [w] (w4) {};          \node at (-1.5, 3) {$w$};
    
        \draw (0.5, 2) node [w] (w1) {};        \draw (1.5, 3) node [w] (w2) {};        \draw (1, 4) node [w] (w3) {};            
       \draw[edge, thick] (w1) to (w2);
       \draw[edge, thick] (w2) to (w3);
       \draw[edge, thick] (w3) to (w4);

  \end{tikzpicture}     \caption{The sets $U$ and $Y$ and $w\in U$ and the directed path $P$ obtained in Lemma~\ref{lem:specialset}. The first arc of $P$ is contained in $Z_1\Delta Z_2$. }\label{fig:twosubsets}
\end{figure}

The following lemma is used to recursively find a subset of large $\mu$-value together with some particular path containing an arc that is useful to control the values $\abs{A(P_e)\cap Z_1}$ and $\abs{A(P_e)\cap Z_2}$ later.  We illustrate the sets and the path obtained in Lemma~\ref{lem:specialset} in Figure~\ref{fig:twosubsets}. 
\begin{lemma}\label{lem:specialset}
Let $q\ge 2$ be an integer. Let $D$ be a strongly connected digraph with $x\in V(D)$ and let $Z_1,Z_2\subseteq A(D)$. If 
$\mu(D,Z_1,Z_2)\ge 3072q^2$, then there exist two vertex subsets $U\subseteq Y\subseteq V(D)\setminus \{x\}$, a vertex $w\in U$, a directed path $P$ in $D[Y]$ of length at least $1$, and $r,s\in [q]$  such that 
\begin{enumerate}
    \item $D[U]$ and $D[Y]$ are strongly connected,
    
 \item $V(P)\cap U=\{w\}$ and $w$ is the last vertex of $P$,

\item $\mu(D[U], Z_1, Z_2)\ge \frac{\mu(D,Z_1,Z_2)}{2}-1536$, 
    \item the first arc of $P$ is in $Z_1\Delta Z_2$, and 
    \item for the first two vertices $v$ of $P$, there is an $(x, v)$-path $Q$ in $D[(V(D)\setminus Y)\cup \{v\}]$ satisfying that 
$|A(Q)\cap Z_1|\equiv r\pmod q$ and $|A(Q)\cap Z_2|\equiv s\pmod q$.
\end{enumerate}
\end{lemma}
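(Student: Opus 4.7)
The plan is to proceed in three steps: (i) localize $\mu$ to a level of an out-leveling from $x$, (ii) use a residue pigeonhole together with Lemma~\ref{lem:cycletwoedges} to extract an arc $(v_0,v_1)\in Z_1\Delta Z_2$ whose endpoints admit $(x,\cdot)$-paths with identical residues, and (iii) obtain $U$ by removing $v_0$ and $P$ by extending inside $Y$.

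\emph{Step (i).} Since $D$ is strongly connected, apply an out-leveling $(L_0,L_1,\ldots,L_p)$ with $L_0=\{x\}$. By Lemma~\ref{lem:lel} there exist $i\in[p]$ and a strong component $Y$ of $D[L_i]$ with $\mu(D[Y],Z_1,Z_2)\ge \mu(D,Z_1,Z_2)/2\ge 1536q^2$. The associated out-BFS-tree provides, for each $y\in Y$, a shortest directed $(x,y)$-path $Q_y$ of length exactly $i$ whose internal vertices lie in $L_0\cup\cdots\cup L_{i-1}\subseteq V(D)\setminus Y$, so that $Q_y\subseteq D[(V(D)\setminus Y)\cup\{y\}]$. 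Define the residue map
\[\phi(y):=\bigl(|A(Q_y)\cap Z_1|\bmod q,\ |A(Q_y)\cap Z_2|\bmod q\bigr)\in [q]\times[q].\]

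\emph{Step (ii).} Partition $Y=\bigsqcup_{(r,s)\in[q]^2}Y_{r,s}$ by $\phi$. Since witness $\mu$-colorings of the parts using pairwise disjoint palettes merge into a witness coloring of $Y$, we have $\mu(D[Y],Z_1,Z_2)\le \sum_{(r,s)}\mu(D[Y_{r,s}],Z_1,Z_2)$, so some class satisfies $\mu(D[Y_{r,s}],Z_1,Z_2)\ge \mu(D[Y])/q^2\ge 1536$. By Lemma~\ref{lem:comp} some strong component $Y^*$ of $D[Y_{r,s}]$ realizes that $\mu$-value. Apply Lemma~\ref{lem:cycletwoedges} to $D[Y^*]$ to obtain a directed cycle $C\subseteq D[Y^*]$ containing at least two arcs in $Z_1\Delta Z_2$, and fix one such arc $(v_0,v_1)$. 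Since $v_0,v_1\in Y^*\subseteq Y_{r,s}$, $\phi(v_0)=\phi(v_1)=(r,s)$, and $Q_{v_0},Q_{v_1}$ discharge condition~(5).

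\emph{Step (iii).} As removing one vertex reduces $\mu$ by at most~$1$,
\[\mu(D[Y\setminus\{v_0\}],Z_1,Z_2)\ge \mu(D[Y],Z_1,Z_2)-1\ge \mu(D,Z_1,Z_2)/2-1536,\]
and by Lemma~\ref{lem:comp} some strong component $U$ of $D[Y\setminus\{v_0\}]$ attains this value. I let $P$ begin with $(v_0,v_1)$, followed by a shortest directed $v_1$-to-$U$ path inside $D[Y\setminus\{v_0\}]$ ending at a vertex $w\in U$; minimality of the extension forces $V(P)\cap U=\{w\}$, and the remaining conditions follow from the construction.

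\emph{Main obstacle.} The delicate point is verifying that a large-$\mu$ strong component $U$ of $D[Y\setminus\{v_0\}]$ is actually reachable from $v_1$; without this the extension of $P$ does not exist, and the maximum-$\mu$ component need not lie in $v_1$'s reachable set in the DAG of strong components of $D[Y\setminus\{v_0\}]$. I plan to exploit that Lemma~\ref{lem:cycletwoedges} supplies \emph{two} arcs in $Z_1\Delta Z_2$ on $C$: after deleting $v_0$, the remaining part of $C$ still gives a directed path from $v_1$ into a large portion of $V(C)\setminus\{v_0\}$, anchoring $v_1$ inside a sizable portion of the DAG. A case analysis---either the maximum-$\mu$ component is already reachable from $v_1$, or one switches to the second $Z_1\Delta Z_2$-arc of $C$, or Lemma~\ref{lem:cycletwoedges} is re-applied to the unreachable part---yields a reachable strong component with $\mu$-value at least $\mu(D,Z_1,Z_2)/2-1536$, where the slack $-1536$ exactly matches the threshold from Lemma~\ref{lem:cycletwoedges}.
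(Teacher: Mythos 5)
Your Steps (i) and (ii) match the paper's proof quite closely: both take an out-BFS-tree from $x$, pass to a high-$\mu$ strong component $Y$ of a level by Lemma~\ref{lem:lel}, split $Y$ into at most $q^2$ residue classes via the BFS-paths $Q_y$, pick a class with $\mu\ge 1536$, and extract a directed cycle $C$ with two arcs in $Z_1\Delta Z_2$ via Lemma~\ref{lem:cycletwoedges}.

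Step (iii) is where the gap you already sensed is real and, in my view, not repairable along the lines you sketch. After deleting only $v_0$, you take $U$ to be a maximum-$\mu$ strong component of $D[Y\setminus\{v_0\}]$, but there is no reason $U$ should be reachable from $v_1$ inside $D[Y\setminus\{v_0\}]$. Since $P$ is forced to start with the arc $(v_0,v_1)$ and is a path, every subsequent step must avoid $v_0$; if $v_0$ is the unique vertex on all directed $v_1$--$U$ paths of $D[Y]$, no admissible $P$ exists. Switching to the second $Z_1\Delta Z_2$-arc $(v_0',v_1')$ of $C$ does not obviously help: both arcs of $C$ lie in the same strong component $Y^*$, so after deleting the corresponding tail, the maximum-$\mu$ component $U$ can again sit in a part of the condensation that neither head reaches. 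Re-applying Lemma~\ref{lem:cycletwoedges} to the unreachable portion is also problematic, because a fresh cycle there need not have its endpoints in the residue class $Y_{r,s}$ you already committed to, so condition~(5) breaks. In short, you have not established the disjointness $V(C)\cap U=\emptyset$ that makes the concatenation work, and the reachability you need is genuinely in doubt.

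The paper sidesteps this with one extra idea you are missing: inside the chosen residue class $Y^*$, take a \emph{minimal} subset $Y^{**}\subseteq Y^*$ with $\mu(D[Y^{**}],Z_1,Z_2)\ge 1536$. Minimality (together with Lemma~\ref{lem:comp}) forces $D[Y^{**}]$ to be strongly connected with $\mu$-value \emph{exactly} $1536$, and Lemma~\ref{lem:cycletwoedges} is then applied to $Y^{**}$, so $V(C)\subseteq Y^{**}$. Now remove the whole set $Y^{**}$ (not just one vertex); by subadditivity of $\mu$ over a vertex partition this costs only $\mu(D[Y^{**}])=1536$, which is exactly where the $-1536$ in the lemma comes from. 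Take $U\subseteq Y\setminus Y^{**}$ minimal realizing $\mu(D[Y\setminus Y^{**}])$, so $D[U]$ is strongly connected and $V(C)\cap U=\emptyset$. Finally, because $D[Y]$ itself is strongly connected, there is a directed $(V(C),U)$-path $P^*$ whose internal vertices avoid $V(C)\cup U$; since $C$ has \emph{two} good arcs with distinct tails, at least one tail differs from the start vertex of $P^*$, and following $C$ from that tail into $P^*$ yields the desired $P$. The crucial point is that $P^*$ is taken in all of $D[Y]$ with disjointness guaranteed by construction, so there is no reachability problem to case-split on.
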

\begin{proof}
Since $D$ is strongly connected, there is an out-BFS-tree $T^+$ of $D$ with root $x$. Let $L_{j}=\{v\in V(D):\dist_{T^+}(x,v)=j\}$. Let $p$ be the maximum distance from $x$ to a vertex in $D$. Then $(L_0,L_1,\ldots, L_{p})$ is an out-leveling of $D$ with starting vertex $x$. 
By Lemma~\ref{lem:lel} there are $i\in [p]\cup \{0\}$  and $Y\subseteq L_{i}$ such that  

\begin{itemize}
\item $D[Y]$ is a strong component of $D[L_{i}]$, and 
\item $\mu(D[Y],Z_1,Z_2)\ge \frac{\mu(D,Z_1,Z_2)}{2}$. 
\end{itemize}
As $\mu(D[Y],Z_1,Z_2)\ge 2$, we have $i\neq 0$.

Since $T^+$ is an out-BFS-tree of $D$, for each $v\in Y$, there is a unique directed $(x, v)$-path $Q_v$ in $T^+$ that intersects $Y$ only at $v$. 
 We define an equivalence relation $\sim$ on $Y$ such that for vertices $u,v\in Y$, $u\sim v$ if and only if \begin{itemize}
     \item $|A(Q_u)\cap Z_1|\equiv |A(Q_v)\cap Z_1|\pmod q$ and
     \item $|A(Q_u)\cap Z_2|\equiv |A(Q_v)\cap Z_2|\pmod q$.
\end{itemize}
Observe that there are at most $q^2$ equivalence classes. 
Thus, there is an equivalence class, say $Y^*$, satisfying that  
\begin{itemize}
    \item $\mu(D[Y^*],Z_1,Z_2)\ge \frac{\mu(D[Y],Z_1,Z_2)}{q^2}\ge 1536$.
\end{itemize}
Let $r, s\in [q]$ with $r\equiv |A(Q_u)\cap Z_1|\pmod q$ and
     $s\equiv |A(Q_u)\cap Z_2|\pmod q$ for $u\in Y^*$.

Let $Y^{**}\subseteq Y^*$ be a minimal subset with $\mu(D[Y^{**}], Z_1, Z_2)\ge 1536$. Then the minimality of~$Y^{**}$ and Lemma~\ref{lem:comp} imply that $D[Y^{**}]$ is strongly connected and  $\mu(D[Y^{**}], Z_1, Z_2)= 1536$. As $\mu(D[Y^{**}], Z_1, Z_2)= 1536$, by Lemma~\ref{lem:cycletwoedges}, $D[Y^{**}]$ contains a directed cycle containing at least two arcs in $Z_1\Delta Z_2$. We call it $C$.

Let $U\subseteq Y\setminus Y^{**}$ be a minimal subset with $\mu(D[U], Z_1, Z_2)= \mu(D[Y\setminus Y^{**}], Z_1, Z_2)$.  Then $D[U]$ is strongly connected. Since $\mu(D[Y],Z_1,Z_2)\ge  \frac{\mu(D,Z_1,Z_2)}{2}$,  we have
\begin{align*}
\mu(D[U], Z_1, Z_2)&\ge \mu(D[Y], Z_1, Z_2)-\mu(D[Y^{**}], Z_1, Z_2) \\
&\ge   \frac{\mu(D,Z_1,Z_2)}{2}-1536.
\end{align*}
Let $P^*$ be a directed $(V(C), U)$-path in $D[Y]$. Such a path exists as $D[Y]$ is strongly connected. As $C$ contains two arcs in $Z_1\Delta Z_2$, $C\cup P^*$ contains a subpath $P$ ending at $U$ such that the starting arc is an arc of $C$ contained in $Z_1\Delta Z_2$. Denote by  $w$ the last vertex of $P$ in $U$. Observe that $Y$, $U$, $x$, $w$, $P$, $r$, $s$ satisfy the conditions (1)--(5).
\end{proof}

Lemma~\ref{lem:sequences} describes sequences of sets and paths that can be obtained by applying Lemma~\ref{lem:specialset} recursively.
\begin{lemma}\label{lem:sequences}
Let $q\ge 2$ be an integer. Let $D$ be a strongly connected digraph with $x\in V(D)$ and let $Z_1,Z_2\subseteq A(D)$. If 
$\mu(D,Z_1,Z_2)\ge 1536\cdot 2^{2q-3}(q^2+1)-3072$, then there are sequences $(X_j:j\in [2q-2])$, $(Y_j:j\in [2q-3])$ of sets of vertices in $D$  and a sequence $(x_j:j\in [2q-2])$ of vertices in $D$ and a sequence $(P_j:j\in [2q-3])$ of directed paths in $D$  and sequences $(r_j:j\in [2q-3])$, $(s_j:j\in [2q-3])$  of integers such that $X_1=V(D)$,  $D[X_{2q-2}]$ is strongly connected, and for every $i\in [2q-3]$, 
\begin{enumerate}
    \item $x_i\in X_i$ and $X_{i+1}\subseteq Y_i\subseteq X_i\setminus \{x_i\}$,
    \item $D[X_i]$ and $D[Y_i]$ are strongly connected,
    
    \item $P_i$ is a directed path in $D[Y_i]$ of length at least $1$, $V(P_i)\cap X_{i+1}=\{x_{i+1}\}$ and $x_{i+1}$ is the last vertex of $P_i$,

    \item $\mu(D[X_{i+1}],Z_1,Z_2)\ge \frac{\mu(D[X_{i}],Z_1,Z_2)}{2}-1536$,
    \item the first arc of $P_i$ is in $Z_1\Delta Z_2$, and
    
    \item for the first two vertices $v$ of $P_i$, there is an $(x_i, v)$-path $Q_{(x_i,v)}$ in $D[(X_i\setminus Y_i)\cup \{v\}]$ satisfying that 
$|A(Q_{(x,v)})\cap Z_1|\equiv r_i\pmod q$ and $|A(Q_{(x,v)})\cap Z_2|\equiv s_i\pmod q$.
 \end{enumerate}    
 In particular, if 
 $\mu(D,Z_1,Z_2)\ge 2^{2q-3} \cdot\max (1536(q^2+1), N+3072)-3072$ for some positive integer~$N$, then 
 $\mu(D[X_{2q-2}, Z_1, Z_2])\ge N$.
\end{lemma}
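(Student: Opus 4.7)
The plan is to apply Lemma~\ref{lem:specialset} iteratively $2q-3$ times, each time feeding the strongly connected subdigraph produced by one application (namely $D[U]$) back in as the ambient digraph for the next.

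Concretely, I would initialize $X_1 := V(D)$ and $x_1 := x$. Then, for $i = 1, 2, \dots, 2q-3$ in order, I would invoke Lemma~\ref{lem:specialset} on $D[X_i]$ with distinguished vertex $x_i$, obtaining sets $U \subseteq Y \subseteq X_i \setminus \{x_i\}$, a vertex $w \in U$, a directed path $P$ of length at least $1$ in $D[Y]$ whose first arc lies in $Z_1 \Delta Z_2$, and integers $r, s \in [q]$. I then set $X_{i+1} := U$, $Y_i := Y$, $x_{i+1} := w$, $P_i := P$, $r_i := r$, $s_i := s$. Conditions (1)--(6) in the statement are direct translations of conditions (1)--(5) of Lemma~\ref{lem:specialset} with $D[X_i]$ playing the role of $D$; in particular, condition (6) matches because $(V(D[X_i]) \setminus Y) \cup \{v\} = (X_i \setminus Y_i) \cup \{v\}$.

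The only point requiring care is verifying that the hypothesis $\mu(D[X_i], Z_1, Z_2) \ge 3072 q^2$ of Lemma~\ref{lem:specialset} continues to hold at every step $i \in [2q-3]$. Writing $m_i := \mu(D[X_i], Z_1, Z_2)$, condition (4) gives the recurrence $m_{i+1} \ge m_i/2 - 1536$, which I would homogenize as $m_{i+1} + 3072 \ge (m_i + 3072)/2$, yielding $m_i \ge (m_1 + 3072)/2^{i-1} - 3072$. The bottleneck is $i = 2q-3$: the demand $m_{2q-3} \ge 3072 q^2$ unfolds to $m_1 \ge 1536 \cdot 2^{2q-3}(q^2+1) - 3072$, which is precisely the lemma's hypothesis. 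For the ``in particular'' clause, the extra demand $m_{2q-2} \ge N$ unfolds to $m_1 \ge 2^{2q-3}(N + 3072) - 3072$, and both constraints are subsumed by the stated bound $\mu(D, Z_1, Z_2) \ge 2^{2q-3} \max(1536(q^2+1), N+3072) - 3072$.

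There is no conceptual obstacle beyond this bookkeeping; the proof is essentially a telescoping iteration, and all of the genuinely nontrivial content lives in Lemma~\ref{lem:specialset} itself. If anything warrants double-checking, it is simply that the constant $3072 = 2 \cdot 1536$ is the correct fixed point of the affine recurrence, so that the remaining $\mu$-value at step $2q-3$ is still large enough for one final application of Lemma~\ref{lem:specialset} to produce $X_{2q-2}$.
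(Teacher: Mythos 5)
Your proposal is correct and matches the paper's proof: both iterate Lemma~\ref{lem:specialset} starting from $X_1=V(D)$, $x_1=x$ (the paper phrases the iteration as choosing a maximal $m$ and deriving a contradiction), and both control the $\mu$-values via the homogenized recurrence $\mu(D[X_{i+1}],Z_1,Z_2)+3072\ge \tfrac{1}{2}\bigl(\mu(D[X_i],Z_1,Z_2)+3072\bigr)$, with the same arithmetic showing the stated hypotheses suffice for the last application and for the ``in particular'' clause.
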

\begin{proof}
Suppose that $\mu(D,Z_1,Z_2)\ge 1536\cdot 2^{2q-3}(q^2+1)-3072$.
We claim that for every $m\in [2q-3]$, there are sequences $(X_j:j\in [m+1])$, $(Y_j:j\in [m])$, $(x_j:j\in [m+1])$, $(P_j:j\in [m])$, $(r_j:j\in [m])$, and $(s_j:j\in [m])$ satisfying the conditions (1)--(6). 

As $q\ge 2$, we have $\mu(D,Z_1,Z_2)\ge 1536\cdot 2(q^2+1)-3072\ge 3072q^2$. So, by applying Lemma~\ref{lem:specialset} for $(D, x)$,  
there exist two vertex subsets $U\subseteq Y\subseteq V(D)\setminus \{x\}$, a vertex $w\in U$, a directed path $P$ in $D[Y]$ of length at least $1$, and $r,s\in [q]$  such that 
\begin{enumerate}
    \item $D[U]$ and $D[Y]$ are strongly connected,
    
 \item $V(P)\cap U=\{w\}$ and $w$ is the last vertex of $P$,

\item $\mu(D[U], Z_1, Z_2)\ge \frac{\mu(D,Z_1,Z_2)}{2}-1536$, 
    \item the first arc of $P$ is in $Z_1\Delta Z_2$, and 
    \item for the first two vertices $v$ of $P$, there is an $(x, v)$-path $Q$ in $D[(V(D)\setminus Y)\cup \{v\}]$ satisfying that 
$|A(Q)\cap Z_1|\equiv r\pmod q$ and $|A(Q)\cap Z_2|\equiv s\pmod q$.
\end{enumerate}
We consider $X_1:=V(D)$, $x_1:=x$, and we set $Y_1:=Y$, $X_2:=U$, $P_1:=P$, $x_2:=w$, $r_1:=r$ and $s_1:=s$.
Then  
$(X_1,X_2)$, $(Y_1)$, $(x_1,x_2)$, $(P_1)$, $(r_1)$ and $(s_1)$ are sequences for $m=1$ satisfying the conditions (1)--(6).

Now, let $m$ be the maximum integer in $[2q-3]$ such that there are sequences $(X_j:j\in [m+1])$, $(Y_j:j\in [m])$, $(x_j:j\in [m+1])$, $(P_j:j\in [m])$, $(r_j:j\in [m])$ and $(s_j:j\in [m])$ satisfying the conditions (1)--(6). We show that $m=2q-3$. Suppose that $m<2q-3$.

By the construction, $D[X_{m+1}]$ is  strongly connected. Note that for each $i\in [m]$, we have
\[ 2\Big( \mu(D[X_{i+1}],Z_1,Z_2)+3072 \Big)\ge \mu(D[X_{i}],Z_1,Z_2)+3072. \]
It implies that 
$$\begin{aligned}
\mu(D[X_{m+1}],Z_1,Z_2)&\ge \frac{1}{2^{m}}\Big(\mu(D,Z_1,Z_2)+3072\Big) -3072 \\
&\ge 1536\cdot 2(q^2+1) -3072 = 3072q^2.
\end{aligned}$$

 By applying Lemma~\ref{lem:specialset} for $ (D[X_{m+1}], x_{m+1})$, 
 there exist two vertex subsets $U^*\subseteq Y^*\subseteq X_{m+1}\setminus \{x_{m+1}\}$, a vertex $w^*\in U^*$, a directed path $P^*$ in $D[Y^*]$ of length at least $1$, and $r^*,s^*\in [q]$  such that 
\begin{enumerate}
    \item $D[U^*]$ and $D[Y^*]$ are strongly connected,
    
 \item $V(P^*)\cap U^*=\{w^*\}$ and $w^*$ is the last vertex of $P^*$,

\item $\mu(D[U^*], Z_1, Z_2)\ge \frac{\mu(D[X_{m+1}],Z_1,Z_2)}{2}-1536$, 
    \item the first arc of $P^*$ is in $Z_1\Delta Z_2$, and 
    \item for the first two vertices $v$ of $P^*$, there is an $(x_{m+1}, v)$-path $Q$ in $D[(X_{m+1}\setminus Y)\cup \{v\}]$ satisfying that 
$|A(Q)\cap Z_1|\equiv r\pmod q$ and $|A(Q)\cap Z_2|\equiv s\pmod q$.
\end{enumerate}
 We set $Y_{m+1}:=Y^*$, $X_{m+2}:=U^*$,  $P_{m+1}:=P^*$, $x_{m+2}:=w^*$, 
$r_{i+1}:=r^*$, $s_{i+1}:=s^*$, respectively. Then 
$(X_j:j\in [m+2])$, $(Y_j:j\in [m+1])$, $(x_j:j\in [m+2])$, $(P_j:j\in [m+1])$, $(r_j:j\in [m+1])$ and $(s_j:j\in [m+1])$ satisfy the conditions (1)--(6) which contradicts the maximality of $m$.

Therefore, $m=2q-3$.
By the recurrence relation, we have 
\[\mu(D[X_{2q-2}],Z_1,Z_2)\ge \frac{1}{2^{2q-3}}\Big(\mu(D,Z_1,Z_2)+3072\Big) -3072
\ge (N+3072) -3072 = N.\]
This proves the theorem. 
\end{proof}

\begin{proposition}\label{prop:Xpath}
There is a function $g:\mathbb{N}\times \mathbb{N}\to \mathbb{N}$ satisfying the following. 

Let $q\ge 2$ and $N\ge 2$ be integers. Let $D$ be a strongly connected digraph and let $Z_1,Z_2\subseteq A(D)$. If 
$\mu(D,Z_1,Z_2)\ge g(q,N)$, then there exists a set $X\subseteq V(D)$ such that 
\begin{itemize}
\item $\mu(D[X],Z_1,Z_2)\ge N$, and 
\item for every pair $(u,v)$ of distinct vertices in $X$ and every tuple $(a, b, \ell)$ of integers with $\gcd(a,q)=\gcd(b,q)=1$, there exists a directed $X$-path $Q$ from $u$ to $v$ satisfying that $a|A(Q)\cap Z_1|+b|A(Q)\cap Z_2|\equiv \ell\pmod q$.
\end{itemize}
\end{proposition}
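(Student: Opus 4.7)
The plan is to combine the outer routing structure from Corollary~\ref{cor:longpath2} with the inner gadget sequence from Lemma~\ref{lem:sequences}, and to close the argument with a pigeonhole on the arcs of $Z_1\Delta Z_2$ appearing at the start of the gadget paths.

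I would first apply Corollary~\ref{cor:longpath2} to $D$ with $m=2$ to obtain a nested chain $V(D)=S_0\supseteq S_1\supseteq S_2$ with $D[S_1]$ and $D[S_2]$ strongly connected and $\mu(D[S_2],Z_1,Z_2)$ still large. The crucial feature is that for any two distinct vertices $x,y\in S_2$, one has two directed $S_2$-paths from $x$ to $y$: one inside $D[S_2\cup(V(D)\setminus S_1)]$ (internal vertices in the outer layer $V(D)\setminus S_1$) and one inside $D[S_2\cup(S_1\setminus S_2)]$ (internal vertices in the middle layer $S_1\setminus S_2$), so the two paths are automatically vertex-disjoint away from their endpoints. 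I would then apply Lemma~\ref{lem:sequences} inside $D[S_2]$ with some starting vertex $x_1\in S_2$ to obtain the nested sets $S_2=X_1\supseteq Y_1\supseteq X_2\supseteq\cdots\supseteq X_{2q-2}$ and the gadget paths $P_1,\dots,P_{2q-3}$, and set $X:=X_{2q-2}$. Picking $g(q,N)$ large enough for both applications to go through with $\mu(D[X_{2q-2}],Z_1,Z_2)\ge N$ disposes of the first bullet, and note $x_1\notin X$ since $x_i\notin X_{i+1}$ for all $i$.

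For the second bullet, given distinct $u,v\in X$ and a tuple $(a,b,\ell)$ with $\gcd(a,q)=\gcd(b,q)=1$, I would build the $X$-path as a concatenation of three segments. Let $Q_1$ be the directed $S_2$-path from $u$ to $x_1$ routed through the outer layer, and let $Q_2$ be the directed $S_2$-path from $x_{2q-2}$ to $v$ routed through the middle layer (taken empty if $v=x_{2q-2}$). Between them, insert a gadget traversal $x_1\to x_2\to\cdots\to x_{2q-2}$ inside $D[S_2]$ that at each step $i$ chooses one of two options: \emph{Option}~$0$ follows $Q_{(x_i,p_0)}$ to the first vertex $p_0$ of $P_i$ and then traverses all of $P_i$; \emph{Option}~$1$ follows $Q_{(x_i,p_1)}$ to the second vertex $p_1$ and then traverses $P_i$ from $p_1$ to $x_{i+1}$. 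Since the $i$-th gadget segment lives entirely inside the slice $X_i\setminus X_{i+1}$ and these slices are disjoint for different $i$, the gadget traversal is a simple path with internal vertices in $S_2\setminus X$. The three internal-vertex sets $V(D)\setminus S_1$, $S_2\setminus X$, $S_1\setminus S_2$ are pairwise disjoint and all disjoint from $X$, so the concatenation $Q_1\cdot(\text{gadget})\cdot Q_2$ is a simple directed $X$-path from $u$ to $v$.

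It remains to choose the options so that the modular constraint holds. By condition~(6) of Lemma~\ref{lem:sequences}, the two prefix paths $Q_{(x_i,p_0)}$ and $Q_{(x_i,p_1)}$ have the same residues modulo $q$ in both $Z_1$ and $Z_2$, so toggling between Option~$0$ and Option~$1$ at step $i$ changes the path only by the first arc of $P_i$, which by condition~(5) lies in $Z_1\Delta Z_2$. Call step $i$ \emph{type~A} if this first arc lies in $Z_1\setminus Z_2$ and \emph{type~B} otherwise. By the pigeonhole principle, among the $2q-3$ steps at least $q-1$ are of the same type; WLOG assume at least $q-1$ are type~A. Then, for any $k\in\{0,1,\dots,q-1\}$, setting exactly $k$ of those type-A steps to Option~$0$ and every other step to Option~$1$ shifts $|A(Q)\cap Z_1|$ by $k$ modulo $q$ while leaving $|A(Q)\cap Z_2|$ unchanged, hence shifts $a|A(Q)\cap Z_1|+b|A(Q)\cap Z_2|$ by $ak\pmod q$. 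Since $\gcd(a,q)=1$, the values $\{ak\bmod q:0\le k\le q-1\}$ exhaust $\mathbb{Z}/q\mathbb{Z}$, so some choice of $k$ cancels the fixed contributions of $Q_1$, $Q_2$, and the all-Option-$1$ gadget baseline and realizes the target residue $\ell$. (If type~B dominates instead, the same argument goes through using $\gcd(b,q)=1$.) The main obstacle I expect is ensuring that the concatenation of the three segments is a \emph{simple} path rather than merely a walk; the two-layer structure from Corollary~\ref{cor:longpath2} with $m=2$ is tailored precisely to separate $Q_1$ and $Q_2$ into disjoint external layers, while the gadget stays within $S_2$, so simplicity is enforced by construction.
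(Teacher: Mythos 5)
Your overall strategy---an outer detour structure to route from $u$ back to the gadget entry $x_1$ and from the gadget exit to $v$, the chain of gadgets from Lemma~\ref{lem:sequences}, and the toggle-plus-pigeonhole argument with $\gcd(a,q)=\gcd(b,q)=1$ to hit the residue $\ell$---is essentially the paper's proof; the paper obtains the two external detours from an in-leveling of $D$ and a final out-leveling inside $X_{2q-2}$ rather than from Corollary~\ref{cor:longpath2} with $m=2$, but that difference is cosmetic. However, there is a genuine gap in your choice $X:=X_{2q-2}$. The gadget traversal ends at $x_{2q-2}$, and $x_{2q-2}\in X_{2q-2}=X$: Lemma~\ref{lem:sequences} only gives $x_i\notin X_{i+1}$ for $i\in[2q-3]$, so $x_1,\dots,x_{2q-3}\notin X$, but nothing excludes $x_{2q-2}$ from $X$ (indeed $x_{2q-2}\in X_{2q-2}$ by construction). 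Hence whenever $v\neq x_{2q-2}$, your concatenation $Q_1\cdot(\text{gadget})\cdot Q_2$ has $x_{2q-2}$ as an internal vertex lying in $X$, so it is not a directed $X$-path, which is exactly what the second bullet demands (and what the induction in Theorem~\ref{thm:mainthm} needs, since the new branching path must avoid all branch vertices, which lie in $X$). Moreover, if $u=x_{2q-2}$ the construction is not even a path, since the walk returns to $u$ at the end of the gadget; your case analysis only treats $v=x_{2q-2}$.

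The gap is repairable. One fix within your framework is to take $X:=X_{2q-2}\setminus\{x_{2q-2}\}$ and require $\mu(D[X_{2q-2}],Z_1,Z_2)\ge N+1$ (adjusting $g$ accordingly); then the gadget and the junction $x_{2q-2}$ avoid $X$ entirely, $u\neq x_{2q-2}\neq v$, and your disjoint-layer simplicity argument goes through. The paper instead performs one more leveling step: it takes an out-leveling of $D[X_{2q-2}]$ rooted at $x_{2q-2}$ and lets $X$ be a strong component of a positive level, which simultaneously guarantees $x_{2q-2}\notin X$ and provides the final leg from $x_{2q-2}$ to $v$ inside $D[X_{2q-2}]$ meeting $X$ only at $v$, so no second external layer is needed. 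As written, though, the assertion that the concatenation is a simple directed $X$-path is false, so the proof does not stand without one of these repairs.
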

\begin{proof}
Set
\begin{itemize}
    \item $g(q,N):=2^{2q-2}\cdot \max (1536 (q^2+1), 2N+3072)-6144$.
\end{itemize}

Since $D$ is strongly connected, there is an in-leveling $(L_0, L_1, \ldots,L_p)$ of $D$ with starting vertex~$x_0$. By Lemma~\ref{lem:lel}, there are $\delta\in [p]\cup \{0\}$ and a set $X^*\subseteq V(D)$ such that 
\begin{itemize}
\item $D[X^*]$ is a strong component of $D[L_\delta]$, and 
\item $\mu(D[X^*],Z_1,Z_2)\ge \frac{g(q,N)}{2}$. %
\end{itemize}
As $\mu(D[X^*],Z_1,Z_2)\ge 2$, we have $\delta\neq 0$. Let $P$ be a directed  $(x_0, X^*)$-path in $D$, and let $x_1$ be the last vertex of $P$. For every $u\in X^*$, there is a directed path $P_{(u,x_0)}$ in $D$ from $u$ to $x_0$ intersecting $X^*$ at $u$ in $D$, which passes through exactly one vertex in $L_j$ for each $j\in [\delta]$. For each $u\in X^*$, let $P_{(u,x_1)}$ be a directed path from $u$ to $x_1$ contained in $P\cup P_{(u,x_0)}$. Then $V(P_{(u, x_1)})\cap X^*=\{x_1,u\}$.

Observe that  
\[\mu(D[X^*],Z_1,Z_2)\ge \frac{g(q,N)}{2}= 2^{2q-3}\cdot \max (1536 (q^2+1), 2N+3072)-3072.\] By applying Lemma~\ref{lem:sequences} for $(X^*, x_1)$, there are sequences $(X_j:j\in [2q-2])$, $(Y_j:j\in [2q-3])$ of sets of vertices in $D$  and a sequence $(x_j:j\in [2q-2])$ of vertices in $D$ and a sequence $(P_j:j\in [2q-3])$ of directed paths in $D$  and sequences $(r_j:j\in [2q-3])$, $(s_j:j\in [2q-3])$  of integers such that $X_1=X^*$, $D[X_{2q-2}]$ is strongly connected, $\mu(D[X_{2q-2}], Z_1, Z_2)\ge 2N$ and for every $i\in [2q-3]$, 
\begin{enumerate}
    \item $x_i\in X_i$ and $X_{i+1}\subseteq Y_i\subseteq X_i\setminus \{x_i\}$, 
    \item $D[X_i]$ and $D[Y_i]$ are strongly connected,
    
    \item $P_i$ is a directed path in $D[Y_i]$ of length at least $1$, $V(P_i)\cap X_{i+1}=\{x_{i+1}\}$ and $x_{i+1}$ is the last vertex of $P_i$,

    \item $\mu(D[X_{i+1}],Z_1,Z_2)\ge \frac{\mu(D[X_i],Z_1,Z_2)}{2}-1536$, 
    \item the first arc of $P_i$ is in $Z_1\Delta Z_2$,  and
    
    \item for the first two vertices $v$ of $P_i$, there is an $(x_i, v)$-path $Q_{(x_i,v)}$ in $D[(X_i\setminus Y_i)\cup \{v\}]$ satisfying that 
$|A(Q_{(x_i,v)})\cap Z_1|\equiv r_i\pmod q$ and $|A(Q_{(x_i,v)})\cap Z_2|\equiv s_i\pmod q$.
 \end{enumerate}
We take an out-leveling $(L'_0,L'_1,\ldots, L'_{t} )$ of $D[X_{2q-2}]$ with starting vertex $x_{2q-2}$. By Lemma~\ref{lem:lel} there are $\delta'\in [t]\cup \{0\}$  and $X\subseteq L'_{\delta'}$ such that  

\begin{itemize}
\item $D[X]$ is a strong component of $D[L'_{\delta'}]$, and 
\item $\mu(D[X],Z_1,Z_2)\ge \frac{\mu(D[X_{2q-2}],Z_1,Z_2)}{2}\ge N$. 
\end{itemize}
As $\mu(D[X],Z_1,Z_2)\ge N\ge 2$, we have $\delta'\neq 0$. Observe that for every $u\in X$, there is a directed path $P_{(x_{2q-2},u)}$ from $x_{2q-2}$ to $u$ in $D[X_{2q-2}]$ intersecting $X$ at $u$ which passes through exactly one vertex in $L'_j$ for each $j\in [\delta']\cup \{0\}$.

We claim that this set $X$ is the desired set. To see this, 
it suffices to show that for every pair of distinct vertices $u,v\in X$ and every tuple $(a, b, \ell)$ of integers with $\gcd(a,q)=\gcd(b,q)=1$, there exists a directed $X$-path $Q$ from $u$ to $v$ satisfying that $a|A(Q)\cap Z_1|+b|A(Q)\cap Z_2|\equiv \ell\pmod q$.
Let $u,v$ be distinct vertices of $X$ and let $(a, b, \ell)$ be a tuple of integers with $\gcd(a,q)=\gcd(b,q)=1$.

 For each $j\in [2q-3]$, let $u_j$ and $v_j$ be the first and second vertices of $P_j$, respectively. By Condition~(5), the arc $(u_j, v_j)$ is contained in $Z_1\Delta Z_2$.
By the pigeonhole principle, $\{(u_j,v_j):j\in [2q-3]\}$ contains either at least $q-1$ arcs in $Z_1\setminus Z_2$ or at least $q-1$ arcs in $Z_2\setminus Z_1$. 
We assume that $\{(u_j,v_j):j\in [2q-3]\}$ contains at least $q-1$ arcs in $Z_1\setminus Z_2$. When $\{(u_j,v_j):j\in [2q-3]\}$ contains at least $q-1$ arcs in $Z_2\setminus Z_1$, the proof will be symmetric.

Let $I\subseteq [2q-3]$ be a set of size exactly $q-1$ where the arcs in $\{(u_j, v_j):j\in I\}$ are contained in $Z_1\setminus Z_2$.
Let $i_1<i_2< \cdots <i_{q-1}$ be the integers in $I$.
For each $k\in [q]$,  let $I_k:=\{i_j:j\in [k-1]\}$.

For each $k\in [q]$, let \[Q_k=P_{(u,x_1)} \cup P_{(x_{2q-2},v)}\cup \bigcup_{j\in I_k}(Q_{(x_j,u_j)}\cup P_j)\cup \bigcup_{j\in [2q-3]\setminus I_k} (Q_{(x_j, v_j)}\cup (P_j-u_j)).\]

By Condition~(6), for each $i\in [2q-3]$ and for $v\in \{u_i,v_i\}$, the directed path $Q_{(x_i,v)}$ in $D[(X_i\setminus Y_i)\cup \{v\}]$ satisfies that 
$|A(Q_{(x_i,v)})\cap Z_1|\equiv r_i\pmod q$ and $|A(Q_{(x_i,v)})\cap Z_2|\equiv s_i\pmod q$. Since $(u_i,v_i)\in Z_1\setminus Z_2$ for each $i\in I$, we have that for each $k\in [q]$, $$
\begin{aligned}|A(Q_k)\cap Z_1|&\equiv k-1+|A(Q_1)\cap Z_1|\pmod q,\\
|A(Q_k)\cap Z_2|&\equiv |A(Q_1)\cap Z_2|\pmod q.
\end{aligned}$$
Thus, for distinct $i,j\in [q]$, we have
\[|A(Q_i)\cap Z_1|\not\equiv |A(Q_j)\cap Z_1|{\pmod q} ~\quad\text{and}~\quad |A(Q_i)\cap Z_2|\equiv |A(Q_j)\cap Z_2| {\pmod q}.
\]

Since $\gcd(a, q)=1$, 
the congruence \[ar\equiv \ell-b|A(Q_1)\cap Z_2| \pmod q\] has a unique solution for $r$ in $[q]$.
Let $s\in [q]$ such that $s\equiv r-|A(Q_1)\cap Z_1|+1\pmod q$.
Then $Q_s$ is a directed $X$-path from $u$ to $v$ satisfying that 
$$\begin{aligned}
a|A(Q_s)\cap Z_1|+b|A(Q_s)\cap Z_2|&\equiv  a(|A(Q_1)\cap Z_1|+(s-1))+b|A(Q_s)\cap Z_2| \\ 
&\equiv ar+b|A(Q_s)\cap Z_2|  
\\
&\equiv \ell\pmod q
\end{aligned}$$
This proves the proposition.
\end{proof}

 Now, we prove our main result.
 
\begin{theorem}\label{thm:mainthm}
Let $F$ be a digraph, and for every $e\in A(F)$ let $(a_e,b_e,r_e,q_e)$ be a tuple of integers with $q_e \ge 2$ and $\gcd(a_e,q_e)=\gcd(b_e,q_e)=1$.
Then there exists an integer $N$ satisfying  the following. 

Let $D$ be a digraph and $Z_1, Z_2\subseteq A(D)$. If $\mu(D,Z_1,Z_2)
\ge N$, then $D$ contains a
subdivision $H$ of $F$ such that for every $e\in  A(F)$, the corresponding  branching path $P_e$ in $H$ satisfies that $a_e|A(P_e)\cap Z_1|+b_e|A(P_e)\cap Z_2|\equiv r_e \pmod {q_e}$.
\end{theorem}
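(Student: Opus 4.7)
The plan is to proceed by induction on $|A(F)|$. In the base case $A(F)=\emptyset$, a subdivision of $F$ is merely a choice of $|V(F)|$ distinct vertices in $D$, so taking $N:=\max\{|V(F)|,1\}$ suffices, since $\mu(D,Z_1,Z_2)\ge |V(F)|$ forces $|V(D)|\ge |V(F)|$.

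For the inductive step, I fix an arc $e^*=(u^*,v^*)$ of $F$, set $F':=F-e^*$, and invoke the induction hypothesis for $F'$ to obtain an integer $N'$ that works for $F'$ with the same tuples on the remaining arcs. I then define $N:=g(q_{e^*},N')$, where $g$ is the function provided by Proposition~\ref{prop:Xpath}. Now suppose $D$ is a digraph with $\mu(D,Z_1,Z_2)\ge N$; by Lemma~\ref{lem:comp}, some strong component $D^\circ$ of $D$ satisfies $\mu(D^\circ,Z_1,Z_2)\ge N$, and Proposition~\ref{prop:Xpath} applied to $D^\circ$ with parameters $(q_{e^*},N')$ yields a set $X\subseteq V(D^\circ)$ with $\mu(D^\circ[X],Z_1,Z_2)\ge N'$ and the $X$-path flexibility property: for every pair of distinct $u,v\in X$ and every tuple $(a,b,\ell)$ with $\gcd(a,q_{e^*})=\gcd(b,q_{e^*})=1$, there is a directed $X$-path in $D^\circ$ from $u$ to $v$ realising the prescribed congruence.

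The induction hypothesis applied to $D^\circ[X]$ then produces a subdivision $H'$ of $F'$ inside $D^\circ[X]$ whose branching paths $P_e$ ($e\in A(F')$) satisfy $a_e|A(P_e)\cap Z_1|+b_e|A(P_e)\cap Z_2|\equiv r_e\pmod{q_e}$. Let $u^{**},v^{**}\in X$ denote the images of $u^*,v^*$ under this embedding. I then invoke the $X$-path property with $(a,b,\ell)=(a_{e^*},b_{e^*},r_{e^*})$ to obtain a directed $X$-path $P_{e^*}$ in $D^\circ$ from $u^{**}$ to $v^{**}$ meeting the congruence for $e^*$. Because $P_{e^*}$ is an $X$-path, its internal vertices lie in $V(D^\circ)\setminus X$, and hence are disjoint from $V(H')\subseteq X$; in particular, $P_{e^*}$ meets $H'$ only at its two endpoints, which are branching vertices. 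Consequently $H:=H'\cup P_{e^*}$ is a subdivision of $F$ in $D$ with all required modular constraints, completing the induction.

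The only genuine work in this argument has already been done in Proposition~\ref{prop:Xpath}: all difficulties (finding a highly $(Z_1,Z_2)$-chromatic induced subdigraph with tunable inter-vertex paths, and controlling both the $Z_1$-count and $Z_2$-count modulo $q$ using arcs in $Z_1\Delta Z_2$) are encapsulated in that proposition. Thus the expected obstacle is essentially bookkeeping rather than a new idea; the one point that must be verified carefully is that the $X$-path property genuinely hands us internally-disjoint branching paths, which is immediate from the fact that $X$-paths have their internal vertices outside $X$ while the previously-constructed branching paths live entirely inside $X$.
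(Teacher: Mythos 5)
Your proposal is correct and follows essentially the same argument as the paper: induction on $|A(F)|$, with the base case $A(F)=\emptyset$ handled by counting vertices, and the inductive step deleting an arc, applying Proposition~\ref{prop:Xpath} to get the tunable-$X$-path set, embedding the reduced subdivision in $D[X]$, and closing up with one $X$-path for the deleted arc. The only (minor) difference is that you explicitly pass to a strong component via Lemma~\ref{lem:comp} before invoking Proposition~\ref{prop:Xpath}, a reduction the paper's write-up leaves implicit.
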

\begin{proof}
Let $g$ be the function defined in Proposition~\ref{prop:Xpath}. 

We prove the theorem by induction on $\abs{A(F)}$. For $\abs{A(F)}=0$, the statement holds with $N=|V(F)|$. Suppose that $\abs{A(F)} \ge 1$ and suppose that the theorem holds for any digraph $F'$ where $\abs{A(F')}<\abs{A(F)}$. 
 Let $f\in A(F)$ and 
let $N'$ be the integer obtained by applying induction to $F-f$.

Let $N:= g(q_f, N')$. Let $D$ be a strongly connected digraph with $Z_1,Z_2\subseteq A(D)$ such that $\mu(D,Z_1,Z_2)\ge N$. By Proposition~\ref{prop:Xpath}, there exists a set $X\subseteq V(D)$ such that 
\begin{enumerate}
\item $\mu(D[X],Z_1,Z_2)\ge N'$, and 
\item for every pair of vertices $u,v\in X$ and every tuple $(a, b, \ell)$ of integers with $\gcd(a,q)=\gcd(b,q)=1$, there exists a directed $X$-path $Q$ from $u$ to $v$ satisfying that $a|A(Q)\cap Z_1|+b|A(Q)\cap Z_2|\equiv \ell\pmod q$.
\end{enumerate}
Since $\mu(D[X],Z_1,Z_2)\ge N'$, by the induction hypothesis, $D[X]$ contains a subdivision $H'$ of $F'$ where for every $e\in  A(F')$, the corresponding branching path $P_e$ of $H'$ satisfies that $a_e|A(P_e)\cap Z_1^*|+b_e|A(P_e)\cap Z_2^*|\equiv r_e \pmod {q_e}$. Let $u$ and $v$ be the branching vertices in $H'$ which correspond to the head and tail vertices of $f$, respectively. Then by~(2), there is a directed $X$-path $Q$ from $v$ to $u$ satisfying that $a_f|A(Q)\cap Z_1|+b_f|A(Q)\cap Z_2|\equiv r_f \pmod {q_f}$.
Let $H:=H'\cup Q$. Then $H$ is a subdivision of $F$ contained in $D$ where for every $e\in  A(F)$, the corresponding branching path $P_e$ of $H$ satisfies that $a_e|A(P_e)\cap Z_1|+b_e|A(P_e)\cap Z_2|\equiv r_e \pmod {q_e}$.
\end{proof}

We discuss that we can obtain an analogous result for undirected graphs. For an undirected graph $G$, we denote by $E(G)$ the set of edges in $G$. For an undirected graph $G$, we denoted by $\lrG$ the \emph{biorientation} of $G$, which is the digraph obtained by replacing each edge $uv$ of $G$ with two arcs $(u,v)$ and $(v,u)$. 

Let $G$ be an undirected graph and let $B_1, B_2$ be two sets of edges of $G$.
A cycle $C$ of $G$ is called \emph{$(B_1, B_2)$-unbalanced} if $|E(C)\cap B_1|\neq |E(C)\cap B_2|$.
Let $\mu^*(G, B_1, B_2)$ be the minimum number of parts in a vertex partition $\mathcal{P}$ of $G$ such that for every $P\in \mathcal{P}$, the subgraph induced by $P$ of $G$ contains no $(B_1, B_2)$-unbalanced directed cycles. 

\begin{corollary}\label{cor:undirected}
Let $U$ be an undirected graph, and for every $e\in E(U)$ let $(a_e,b_e,r_e,q_e)$ be a tuple of integers with $q_e \ge 2$ and $\gcd(a_e,q_e)=\gcd(b_e,q_e)=1$.
Then there exists an integer $M$ satisfying the following. 

Let $G$ be an undirected graph and $B_1,B_2\subseteq E(G)$. If $\mu^*(G,B_1,B_2)
\ge M$, then $G$ contains a
subdivision of $U$ such that for every $e\in  E(U)$, the corresponding branching path $P_e$ in $H$ satisfies that $a_e|E(P_e)\cap B_1|+b_e|E(P_e)\cap B_2|\equiv r_e \pmod {q_e}$.
\end{corollary}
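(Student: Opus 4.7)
The plan is to reduce Corollary~\ref{cor:undirected} to Theorem~\ref{thm:mainthm} by passing to the biorientation $\lrG$. Fix an arbitrary orientation $F$ of $U$, and transfer the tuple $(a_e,b_e,r_e,q_e)$ from each edge $e\in E(U)$ to its corresponding arc of $F$. Let $N$ be the integer supplied by Theorem~\ref{thm:mainthm} for $F$ equipped with these tuples, and set $M:=N$.

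Given $G$, $B_1$, $B_2$ with $\mu^*(G,B_1,B_2)\ge M$, define
\[Z_i:=\{(u,v)\in A(\lrG): uv\in B_i\}\quad\text{for } i\in\{1,2\},\]
so that $Z_i$ contains both copies of each edge of $B_i$. The key step is the inequality $\mu(\lrG,Z_1,Z_2)\ge \mu^*(G,B_1,B_2)$: given any partition $\mathcal{P}$ of $V(\lrG)=V(G)$ certifying the left-hand side, I claim $\mathcal{P}$ also certifies the right-hand side. Otherwise some $X\in\mathcal{P}$ contains a $(B_1,B_2)$-unbalanced cycle $C$ in $G[X]$; since $G$ is simple, $C$ has length at least $3$, and orienting $C$ in either direction yields a vertex-simple directed cycle $\vec C$ in $\lrG[X]$ whose arcs biject with $E(C)$. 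Because $(u,v)\in Z_i$ iff $uv\in B_i$, we obtain $|A(\vec C)\cap Z_i|=|E(C)\cap B_i|$ for $i\in\{1,2\}$, so $\vec C$ is $(Z_1,Z_2)$-unbalanced, contradicting the choice of $\mathcal{P}$.

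Hence $\mu(\lrG,Z_1,Z_2)\ge N$, and Theorem~\ref{thm:mainthm} yields a subdivision $H$ of $F$ in $\lrG$ such that for each arc $e'\in A(F)$ arising from $e\in E(U)$, the directed branching path $P_{e'}$ satisfies $a_e|A(P_{e'})\cap Z_1|+b_e|A(P_{e'})\cap Z_2|\equiv r_e\pmod{q_e}$. Forgetting orientations, each $P_{e'}$ becomes an undirected path $P_e$ in $G$ between the same endpoints; vertex-simplicity of $P_{e'}$ forbids it from using both copies of a single edge, so arcs biject with edges and $|A(P_{e'})\cap Z_i|=|E(P_e)\cap B_i|$. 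Internal vertex-disjointness of the branching paths and distinctness of the branching vertices are inherited from $H$, so the collection $\{P_e:e\in E(U)\}$ together with the branching vertices forms a subdivision of $U$ in $G$ satisfying the required congruence on every branching path. There is no essential obstacle; the one technical point is that $Z_i$ must contain \emph{both} orientations of each edge of $B_i$, so that cycles of length at least $3$ in $\lrG$ faithfully mirror the $(B_1,B_2)$-balance in $G$ (the spurious $2$-cycles of $\lrG$ can only make $\mu(\lrG,Z_1,Z_2)$ larger, which is the favorable direction).
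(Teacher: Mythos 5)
Your reduction is correct and mirrors the paper's: form the biorientation of $G$, lift $B_i$ to the arc set $Z_i$ containing both orientations of each edge in $B_i$, verify $\mu(\lrG,Z_1,Z_2)\ge\mu^*(G,B_1,B_2)$ by observing that a $(B_1,B_2)$-unbalanced cycle in $G[X]$ induces a $(Z_1,Z_2)$-unbalanced directed cycle in $\lrG[X]$, and then invoke Theorem~\ref{thm:mainthm}. The one genuine difference is the target digraph: you apply Theorem~\ref{thm:mainthm} to a fixed single orientation $F$ of $U$, whereas the paper applies it to the full biorientation $\lrU$, assigning the same tuple to both $(u,v)$ and $(v,u)$, and afterwards keeps only one of the two resulting branching paths for each edge of $U$. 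Your variant is slightly leaner: $F$ has half as many arcs as $\lrU$, so the constant $N$ coming out of the inductive proof of Theorem~\ref{thm:mainthm} is smaller, and ``forgetting orientations'' yields a subdivision of $U$ directly without pruning. Both routes are valid because the congruence $a_e|A(P_e)\cap Z_1|+b_e|A(P_e)\cap Z_2|\equiv r_e\pmod{q_e}$ is insensitive to the direction of traversal of $P_e$; your proof exploits that symmetry implicitly by letting the orientation of $F$ be arbitrary, while the paper sidesteps it by carrying both orientations through the argument.
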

\begin{proof}
Let $\lrU$ be the biorientation of $U$. For each $e=uv\in E(U)$, let $(u,v)$ and $(v,u)$ be the corresponding arcs contained in $\lrU$ where both $(u,v)$ and $(v,u)$ are assigned with $a_e$, $b_e$, $r_e$ and $q_e$. Then by Theorem~\ref{thm:mainthm}, there is an integer $N$ such that for any digraph $D$ with $Z_1,Z_2\subseteq A(D)$ satisfying that $\mu(D,Z_1,Z_2)\ge N$, $D$ contains a
subdivision $H$ of $\lrU$ such that for every $e\in  A(\lrU)$, the corresponding  branching path $P_e$ in $H$ satisfies that $a_e|A(P_e)\cap Z_1|+b_e|A(P_e)\cap Z_2|\equiv r_e \pmod {q_e}$.

Set $M:=N$. Let $G$ and $B_1,B_2$ be given such that $\mu^*(G,B_1,B_2)\ge N$. Let $\lrG$ be the biorientation of $G$, and for each $i\in [2]$, let $B_i^*:=\{(u,v), (v,u):uv\in B_i\}$ be the set of all arcs corresponding to edges in $B_i$.

Observe that every partition of $V(\lrG)$ where each part induces a subdigraph of $\lrG$ with no $(B_1^*,B_2^*)$-unbalanced directed cycles is also a partition of $V(G)$ where each part induces a subgraph of $G$ with no $(B_1,B_2)$-unbalanced cycles.  It implies that $\mu(\lrG,B_1^*,B_2^*)\ge \mu^*(G,B_1,B_2)\ge N$.
Thus $\lrG$ contains a
subdivision $H$ of $\lrU$ such that for every $e\in  A(\lrU)$, the corresponding branching path $P_e$ in $H$ satisfies that $a_e|A(P_e)\cap B_1^*|+b_e|A(P_e)\cap B_2^*|\equiv r_e \pmod {q_e}$. Therefore, $G$ contains a subdivision $H^*$ of $U$ inherited from $H$ where for each $e\in E(U)$, the corresponding branching path $P_e$ in $H^*$ satisfies that $a_e|A(P_e)\cap B_1|+b_e|A(P_e)\cap B_2|\equiv r_e \pmod {q_e}$.  
\end{proof}

\end{document}